\documentclass[12pt]{amsart}

\usepackage[cp1251]{inputenc}
\usepackage[T2A]{fontenc}
\usepackage{amsfonts}
\usepackage{amsbsy}
\usepackage{amssymb}
\usepackage[english]{babel}
\usepackage{hyperref}

\textwidth=16.0cm \oddsidemargin=0.2cm \evensidemargin=0.2cm
\topmargin=-0.5cm \textheight=22.5cm

\setlength{\textfloatsep}{0.5em plus 0.1em minus 0.1em}
\setlength{\floatsep}{0.5em plus 0.1em minus 0.1em}
\setlength{\intextsep}{0em plus 0.1em minus 0.1em}

\setlength{\intextsep}{6pt plus 2pt minus 2pt}

\renewcommand{\ge}{\geqslant}
\renewcommand{\le}{\leqslant}

\newcommand{\Ker}{\operatorname{Ker}}
\newcommand{\rk}{\operatorname{rk}}
\newcommand{\oSL}{\operatorname{SL}}
\newcommand{\oSp}{\operatorname{Sp}}
\newcommand{\Spin}{\operatorname{Spin}}
\newcommand{\SO}{\operatorname{SO}}
\newcommand{\diag}{\operatorname{diag}}

\newtheorem{theorem}{Theorem}

\newtheorem{lemma}{Lemma}

\theoremstyle{definition}

\newtheorem*{dfn*}{Definition}

\theoremstyle{remark}

\newtheorem{remark}{Remark}


\makeatletter

\renewcommand{\section}{\@startsection{section}{1}{0pt}%
{3ex plus .5ex minus .2ex}{2ex plus .2ex}%
{\center\normalfont\scshape}}

\renewcommand{\subsection}{\@startsection{subsection}{2}{0pt}%
{2.5ex plus .5ex minus .2ex}{-0ex plus .2ex}%
{\normalfont\bfseries}}

\renewcommand{\subsubsection}{\@startsection{subsubsection}{3}{0pt}%
{2.5ex plus .5ex minus .2ex}{1ex plus .2ex}%
{\center\normalfont\bfseries}}

\makeatother

\begin{document}

\renewcommand{\proofname}{Proof}
\renewcommand{\abstractname}{Abstract}
\renewcommand{\refname}{References}
\renewcommand{\figurename}{Figure}
\renewcommand{\tablename}{Table}

\title[Extended weight semigroups]{Extended weight semigroups\\ of affine spherical homogeneous spaces\\ of non-simple semisimple algebraic groups}

\author{Roman Avdeev}

\thanks{Partially supported by the Russian President's Program `Support of Leading Scientific Schools' (grant no. NSh-1983.2008.1)}

\address{Chair of Higher Algebra, Department of Mechanics and Mathematics,
Moscow State University, 1, Leninskie Gory, Moscow, 119992, Russia}

\email{suselr@yandex.ru}


\subjclass[2010]{14M27, 14M17, 22E46, 43A85}

\keywords{Algebraic group, representation, homogeneous space,
algebra of invariants, semigroup}

\begin{abstract}
The extended weight semigroup of a homogeneous space $G/H$ of a
connected semisimple algebraic group $G$ characterizes the spectra
of the representations of $G$ on the spaces of regular sections of
homogeneous linear bundles over $G/H$, including the space of
regular functions on $G/H$. We compute the extended weight
semigroups for all strictly irreducible affine spherical homogeneous
spaces $G/H$, where $G$ is a simply connected non-simple semisimple
complex algebraic group and $H$ a connected closed subgroup of it.
In all the cases we also find the highest weight functions
corresponding to the indecomposable elements of this semigroup.
Among other things, our results complete the computation of the
weight semigroups for all strictly irreducible simply connected
affine spherical homogeneous spaces of semisimple complex algebraic
groups.
\end{abstract}

\maketitle

\section{Introduction}
\label{sec1}

\subsection{}
\label{ssec1.1} Let $G$ be a connected reductive algebraic group
over $\mathbb C$ and $H$ a closed subgroup of it. We consider the
homoge\-ne\-ous space $G/H$ and the algebra $\mathbb C[G/H] =\mathbb
C[G]^H$ of regular functions on it. This algebra has the structure
of a rational $G$-module with respect to the action of $G$ by left
multiplication and decomposes into a direct sum of
finite-dimensional irreducible $G$-modules. A problem of interest is
to find all $\lambda$ for which this decomposition contains the
irreducible $G$-module $V_\lambda$ with highest weight $\lambda$ and
to determine the multiplicity $m_\lambda$ of $V_\lambda$. Those
dominant weights $\lambda$ of $G$ satisfying $m_\lambda\ge1$ form a
semigroup called the \textit{weight semigroup} of the homogeneous
space $G/H$. We denote this semigroup by $\Gamma(G/H)$.

The subgroup $H$ (resp. the homogeneous space $G/H$, the pair
$(G,H)$) is said to be \textit{spherical} if a Borel subgroup
$B\subset G$ has an open orbit in $G/H$. The results of the
paper~\cite{1}, which are discussed in~\S\,\ref{ssec1.3} below,
yield that for spherical homogeneous spaces the $G$-module $\mathbb
C[G/H]$ is \emph{multiplicity free}. (The converse is also true if
$G/H$ is quasi-affine.) By definition, this means that $m_\lambda
\le 1$ for every $\lambda$. For semisimple $G$, the classification
of affine spherical homogeneous spaces (that is, with reductive $H$)
up to local isomorphism was obtained in~\cite{2}--\cite{4}. Namely,
all pairs $(G,H)$ with $G$ a simply connected simple algebraic group
and $H$ a connected reductive spherical subgroup of it are found
in~\cite{2} along with a description of the corresponding weight
semigroup for every such pair. Up to local isomorphism, all affine
spherical homogeneous spaces of non-simple semisimple algebraic
groups are classified in~\cite{3},~\cite{4} (see also~\cite{5} for a
more accurate formulation). Every such space can be obtained by a
certain procedure starting from strictly irreducible spherical
homogeneous spaces. (Their definition will be given
in~\S\,\ref{ssec1.4}.) Up to local isomorphism, these can be of the
following three types:

1) spherical spaces $G/H$, where $G$ is simple (classified
in~\cite{2});

2) spaces $G/H$, where $G = H\times H$, the subgroup $H$ is embedded
in $G$ diagonally, $H$ is simple;

3) spaces $G/H$ corresponding to the pairs $(G,H)$ in
Table~\ref{tab1} below (classified in~\cite{3},~\cite{4}).

The weight semigroups of all spaces of type~1) are computed
in~\cite{2}. For spaces of type~2), the semigroups $\Gamma(G/H)$ are
well known, see~\S\,\ref{ssec1.4}. The weight semigroups for all
spaces of type~3) are computed in the present paper.

More precisely, for each spherical homogeneous space in
Table~\ref{tab1} we compute its extended weight semigroup (to be
precisely defined in~\S\,\ref{ssec1.2}) and the weight functions
corresponding to the indecomposable elements of this semigroup. In
particular, using these results one can easily compute the weight
semigroups for the spaces in Table~\ref{tab1}. Yu.\,V.~Dzyadyk
reported to E.\,B.~Vinberg that he had computed the extended weight
semigroups of all the homogeneous spaces appearing in
Table~\ref{tab1} in~1985. He used another method going back to his
papers~\cite{6},~\cite{7} (see also~\cite{8}) dealing with the case
of symmetric spaces. This method does not require explicit
computation of weight functions. Unfortunately, the results of
Dzyadyk have not been published hitherto.

\subsection{}
\label{ssec1.2} Throughout this paper the base field is the field
$\mathbb C$ of complex numbers, all topological terms relate to the
Zariski topology, all groups are supposed to be algebraic and their
subgroups closed. For any group~$L$ let~$\mathfrak{X}(L)$ denote the
group of its characters in additive notation.

In what follows, we keep the notation $G$ for a connected semisimple
algebraic group. We suppose that a Borel subgroup $B\subset G$ and a
maximal torus $T\subset B$ are fixed. The maximal unipotent subgroup
of $G$ contained in $B$ is denoted by $U$. We identify the group
$\mathfrak X(B)$ with $\mathfrak X(T)$ by restricting the characters
from $B$ to $T$. The set of dominant weights of $B$ is denoted by
$\mathfrak X_+(B)$.

The actions of~$G$ on itself by left multiplication ($(g,x)\mapsto
gx$) and right multiplication ($(g,x)\mapsto xg^{-1}$) induce its
representations on the space~$\mathbb{C}[G]$ of all regular
functions on $G$ given by $(gf)(x)=f(g^{-1}x)$ and $(gf)(x)=f(xg)$,
respectively. For short, we call them the \textit{left action} and
the \textit{right action} respectively. For any subgroup $L\subset
G$, we write~${}^L\mathbb{C}[G]$ (resp. $\mathbb{C}[G]^L$) for the
algebra of functions in~$\mathbb{C}[G]$ that are invariant under the
left (resp. right) action of $L$.

We now introduce the notion of the extended weight semigroup of a
homogeneous space $G/H$.

Let $H\subset G$ be an arbitrary subgroup. For every character
$\chi\in\mathfrak{X}(H)$ we denote by~$V_{\chi}$ the subspace
of~$\mathbb{C}[G]$ consisting of weight functions of weight~$\chi$
with respect to the right action of $H$, that is,
$$
V_{\chi}=\bigl\{f\in \mathbb{C}[G]\colon f(gh)=\chi(h)f(g)\ \
\forall\,g\in G, \ \forall\,h\in H\bigr\}.
$$
Let $H_0\subset G$ be the intersection of the kernels of all
characters in~$\mathfrak X(H)$: $H_0=\bigcap
\limits_{\chi\in\mathfrak{X}(H)}\Ker\chi$. Then $\bigoplus
\limits_{\chi\in\mathfrak{X}(H)}V_\chi=\mathbb{C}[G]^{H_0}$. Since
the left and right actions of~$G$ on~$\mathbb{C}[G]$ commute, the
subspace $V_\chi$ is invariant under the left action of~$G$ for any
$\chi\in\mathfrak{X}(H)$.

We note that the quotient group $H/H_0$ is commutative (that is,
$H/H_0$ is a quasi-torus) and the natural embedding
$\mathfrak{X}(H/H_0)\hookrightarrow\mathfrak{X}(H)$ is an
isomorphism.

Suppose that for each subspace~$V_\chi$ its decomposition into a
direct sum of irreducible $G$-modules is fixed. Then the highest
weight vectors of all these $G$-modules taken over all $\chi$ form a
basis of the algebra (considered as a vector space over $\mathbb C$)
$$
A=A(G/H):={}^U(\mathbb{C}[G]^{H_0})={}^U\mathbb{C}[G]^{H_0}.
$$

Suppose $f\in A\setminus\{0\}$, $\lambda\in\mathfrak{X}_+(T)$, and
$\chi\in\mathfrak{X}(H)$. We say that $f$ is a \textit{weight
function with respect to $B\times H$} (or simply a \textit{weight
function}) \textit{of weight $(\lambda,\chi)$} (in the case
$\mathfrak{X}(H)=0$ we simply write $\lambda$ instead of
$(\lambda,0)$), if $f$ is the highest weight vector of an
irreducible $G$-module $N\subset V_\chi$ with highest
weight~$\lambda$. We denote by~$A_{\lambda,\chi}$ the subspace
of~$A$ consisting of zero and all weight functions of weight
$(\lambda,\chi)$. If $f_1\in A_{\lambda_1,\chi_1}$ and $f_2\in
A_{\lambda_2,\chi_2}$, then $f_1f_2 \in
A_{\lambda_1+\lambda_2,\chi_1+\chi_2}$. Thus the set of pairs
$(\lambda,\chi)$ with $\lambda\in\mathfrak{X}_+(B)$,
$\chi\in\mathfrak{X}(H)$, and $A_{\lambda,\chi}\ne0$ is a semigroup.
We call this semigroup the \textit{extended weight semigroup} (the
term is suggested by Vinberg) of the homogeneous space $G/H$ and
denote it by~$\widehat\Gamma(G/H)$.

\begin{remark}
\label{rm1} We have $\Gamma(G/H)=\{(\lambda,\chi) \in
\widehat\Gamma(G/H)\colon \chi=0\} \subset \widehat\Gamma(G/H)$.
Clearly, $\mathfrak{X}(H)=0$ implies that
$\Gamma(G/H)=\widehat\Gamma(G/H)$.
\end{remark}

\begin{remark}
\label{rm2} The map $\pi\colon\widehat\Gamma(G/H)\to\Gamma(G/H_0)$,
$(\lambda,\chi)\mapsto\lambda$, is surjective, and
$\pi^{-1}(0)=\{(0,0)\}$.
\end{remark}

\begin{remark}
\label{rm3} There is another interpretation of the semigroup
$\widehat\Gamma(G/H)$ in the case when $H$ is connected. Namely,
consider the group $\widehat G=G\times (H/H_0)$ and its subgroup
$\widehat H\simeq H$ embedded in~$\widehat G$ via $h\mapsto
(h,hH_0)$. The algebra $\mathbb{C}[G]^{H_0}$ is a $\widehat
G$-module with respect to the left action of~$G$ and the right
action of~$H/H_0$. Consider the algebra $\mathbb{C}[\widehat
G]^{\widehat H}$ as a $\widehat G$-module with respect to the left
action of~$\widehat G$. The map $\psi\colon\mathbb{C}[\widehat
G]^{\widehat H}\to\mathbb{C}[G]^{H_0}$, $(\psi f)(g)=f(g,eH_0)$, is
an isomorphism of $\widehat G$-modules. (The inverse map $F\mapsto
\widehat F$ is given by $\widehat F(g,hH_0)=F(gh^{-1})$.) Therefore
there is a semigroup isomorphism
$\widehat\Gamma(G/H)\simeq\Gamma(\widehat G/\widehat H)$.
\end{remark}

\subsection{}
\label{ssec1.3} Now suppose that $H\subset G$ is a spherical
subgroup. According to Theorem~1 of~\cite{1}, this implies that the
representation of~$G$ on~$V_{\chi}$ is multiplicity free for each
$\chi\in\mathfrak{X}(H)$. Hence, any pair $(\lambda,\chi)\in
\widehat\Gamma(G/H)$ determines a unique, up to multiplication by a
non-zero constant, weight function $f\in A_{\lambda,\chi}$; that is,
$\dim A_{\lambda,\chi}=1$.

Let us prove the following fact. (Cf. \cite{9}, Proposition~2.)

\begin{theorem}
\label{th1} Suppose~$G$ is simply connected and $H\subset G$ is a
connected spherical subgroup. Then the algebra~$A$ is factorial and
the semigroup $\widehat\Gamma(G/H)$ is free.
\end{theorem}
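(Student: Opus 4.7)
The plan is to combine the multiplicity-freeness from \cite{1} (which makes $A$ a graded domain with one-dimensional homogeneous components) with the factoriality of $A$ (which in turn forces the grading semigroup to be free).

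By Theorem~1 of~\cite{1} the sphericity of $H$ implies that each $V_\chi$ is a multiplicity-free $G$-module, so every subspace $A_{\lambda,\chi}$ is at most one-dimensional. Hence $A$ is a commutative integral domain graded by $\widehat\Gamma(G/H)$ with each nonzero homogeneous component of dimension~$1$, and $A_{(0,0)}=\mathbb{C}$ because ${}^B\mathbb{C}[G]=\mathbb{C}$ (the coset space $B\backslash G$ is projective, so its algebra of regular functions reduces to the constants). Two further preliminaries are needed: (i)~the semigroup $\widehat\Gamma(G/H)$ is cancellative and torsion-free, being contained in the free abelian group $\mathfrak{X}(T)\oplus\mathfrak{X}(H)$ (where $\mathfrak{X}(H)$ is free because $H$ is connected), and (ii)~the only invertible element of $\widehat\Gamma(G/H)$ is $(0,0)$, since dominance of $\pm\lambda$ forces $\lambda=0$ and the $B$-invariance argument above then forces $\chi=0$. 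Consequently the units of $A$ reduce to $\mathbb{C}^*$.

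The core step is the factoriality of $A$. I would start from the classical theorem of Popov asserting that $\mathbb{C}[G]$ is a unique factorization domain for a simply connected semisimple $G$ (since both $\mathfrak{X}(G)$ and $\operatorname{Pic}(G)$ vanish). The algebra $A$ is obtained from $\mathbb{C}[G]$ by first passing to $\mathbb{C}[G]^{H_0}$ (invariants under the right action of $H_0$) and then to ${}^U\mathbb{C}[G]^{H_0}$ (invariants under the left action of $U$). Both groups involved have trivial character groups: $U$ because it is unipotent, and $H_0$ by its very construction as the intersection of kernels of all characters of~$H$ (this is where connectedness of $H$ is used, ensuring that $H_0$ is connected as well). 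The standard result that the algebra of invariants of a connected algebraic group with trivial character group acting on a factorial affine algebra is again factorial then yields that $A$ is a UFD.

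Once $A$ is known to be factorial, the freeness of $\widehat\Gamma(G/H)$ follows by a general graded-UFD argument. Choose any total ordering on $\mathfrak{X}(T)\oplus\mathfrak{X}(H)$ compatible with addition; comparing leading and trailing terms shows that any irreducible factor of a homogeneous element of~$A$ is itself homogeneous. Because each component is one-dimensional, a nonzero element of $A_\gamma$ is irreducible in $A$ precisely when $\gamma$ is indecomposable in $\widehat\Gamma(G/H)$. Uniqueness of prime factorization (modulo the units $\mathbb{C}^*$) then translates directly into the unique expressibility of every $\gamma\in\widehat\Gamma(G/H)$ as a sum of indecomposables, which is the definition of a free commutative semigroup. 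The main potential obstacle is the factoriality step --- specifically, verifying cleanly that the Popov-type criteria apply at each stage $\mathbb{C}[G]\rightsquigarrow\mathbb{C}[G]^{H_0}\rightsquigarrow A$; should this prove delicate, Remark~\ref{rm3} offers a fallback by reducing, via $(\widehat G,\widehat H)$, to the situation in which the $H_0$-step has been absorbed into an enlargement of the ambient group.
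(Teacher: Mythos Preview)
Your approach is essentially the paper's: factoriality of $\mathbb{C}[G]$ from simple connectedness, preservation of factoriality under invariants by a connected characterless group, and then freeness of the grading semigroup from factoriality plus one-dimensional homogeneous components. The one place where the paper is more careful is exactly the spot you flagged as delicate: the assertion that $\mathfrak{X}(H_0)=0$ does \emph{not} follow ``by construction'' (the definition only says that characters of $H$ restrict trivially to $H_0$, not that $H_0$ has no characters of its own). The paper resolves this by writing a Levi decomposition $H=H_u\leftthreetimes R$ and identifying $H_0=H_u\leftthreetimes[R,R]$, which is connected (since $R$ is) and characterless (unipotent $\times$ semisimple); it then applies Theorem~\ref{th2} in a single step to $U\times H_0$ acting on the affine variety $G$, avoiding any question about the intermediate algebra $\mathbb{C}[G]^{H_0}$.
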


To prove Theorem~\ref{th1}, we need the following

\begin{theorem}[{\rm \cite{10}, Theorem~3.17}]
\label{th2} Suppose a regular action of an algebraic group $L$ on an
affine variety $X$ is given. If the algebra $\mathbb C[X]$ is
factorial and $L$ is connected and has no non-trivial characters,
then the algebra $\mathbb C[X]^L$ is also factorial. Moreover, for
any element $f\in \mathbb C[X]^L$ all its divisors in $\mathbb C[X]$
are contained in $\mathbb C[X]^L$.
\end{theorem}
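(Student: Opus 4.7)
The plan is to reduce both assertions to a single key claim: every irreducible factor in $\mathbb C[X]$ of a nonzero $f\in\mathbb C[X]^L$ is itself $L$-invariant. Both conclusions of the theorem follow from this claim almost formally.

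To establish the claim, I would fix $f\in\mathbb C[X]^L$ and write its factorization $f=c\, f_1^{a_1}\cdots f_k^{a_k}$ in the UFD $\mathbb C[X]$, with pairwise non-associate irreducibles $f_1,\dots,f_k$. Since $L$ acts on $\mathbb C[X]$ by algebra automorphisms, each $g\cdot f_i$ is again irreducible; applying $g$ to the factorization of $f$ and invoking uniqueness of factorization, $g$ must send $f_i$ to a scalar multiple of some $f_{\sigma_g(i)}$. The subsets $L_{ij}=\{g\in L:g\cdot f_i\in\mathbb C f_j\}$ are closed in $L$ (each $f_i$ lies in a finite-dimensional $L$-stable subspace of $\mathbb C[X]$ by rationality of the action, inside which the condition $g\cdot f_i\in\mathbb C f_j$ is polynomial in $g$), they cover $L$, and for fixed $i$ they are disjoint for different $j$. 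Connectedness of $L$ then forces a single index $j(i)$; since the identity lies in $L_{ii}$, we must have $j(i)=i$. Hence each line $\mathbb C f_i$ is $L$-stable.

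The action of $L$ on the line $\mathbb C f_i$ is given by a character $\chi_i\colon L\to\mathbb C^*$, which is trivial by the hypothesis that $L$ admits no non-trivial characters. Therefore $f_i\in\mathbb C[X]^L$, proving the claim. The ``moreover'' statement now follows immediately, since every divisor of $f$ in $\mathbb C[X]$ is a product (up to a scalar) of some of the $f_i$, all of which lie in $\mathbb C[X]^L$.

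For factoriality of $\mathbb C[X]^L$, I would argue that each $f_i$ is also irreducible in $\mathbb C[X]^L$: a decomposition $f_i=uv$ with $u,v\in\mathbb C[X]^L$ is in particular a decomposition in $\mathbb C[X]$, so one of $u,v$ is a unit of $\mathbb C[X]$; since the units of both rings coincide with $\mathbb C^*$, it is also a unit of $\mathbb C[X]^L$. Uniqueness of factorization in $\mathbb C[X]^L$ is then inherited directly from $\mathbb C[X]$. The only delicate point in the whole argument is the closedness of the subsets $L_{ij}$, which is a routine consequence of rationality of the $L$-action; beyond that, the proof rests entirely on the UFD property of $\mathbb C[X]$, connectedness of $L$, and the no-characters hypothesis, each used exactly once.
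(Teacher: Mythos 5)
The paper does not prove this statement at all: it is imported verbatim from Popov--Vinberg (reference [10], Theorem~3.17), so there is no internal proof to compare against. Your strategy is the standard one for this result (the group permutes the irreducible factors of an invariant, connectedness forces it to stabilize each factor, and the absence of characters forces each factor to be fixed), and most of the steps are carried out correctly: the closedness of the sets $L_{ij}$, the covering/disjointness argument, and the identification of the action on a stable line with an algebraic character are all fine.

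There is, however, a genuine gap in your handling of units. Uniqueness of factorization in the UFD $\mathbb C[X]$ only tells you that $g\cdot f_i$ is an \emph{associate} of some $f_j$, i.e.\ $g\cdot f_i=u\,f_j$ with $u$ a unit of $\mathbb C[X]$ --- not a scalar. The hypothesis ``$\mathbb C[X]$ is factorial and $X$ affine'' does not force $\mathbb C[X]^\times=\mathbb C^\times$ (take $X=\mathbb C^\times\times Y$, whose coordinate ring is factorial but has units $ct^n$), so your sets $L_{ij}$ need not cover $L$, and your later assertion that ``the units of both rings coincide with $\mathbb C^\times$'' is unjustified. The same issue infects the ``moreover'' clause: a divisor of $f$ is a \emph{unit} times a subproduct of the $f_i$, so invariance of the $f_i$ alone does not give invariance of all divisors. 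The standard repair is to first show that $L$ fixes every unit of $\mathbb C[X]$: by Rosenlicht's theorems, $(g,x)\mapsto u(g^{-1}x)$ is a unit on $L\times X$ and hence splits as $a(g)b(x)$, and the normalized factor $a(g)/a(e)$ is a character of $L$, hence trivial; alternatively one works with the prime divisors $\{f_i=0\}$ rather than the functions $f_i$, shows $L$ stabilizes each by the same connectedness argument, and then applies the unit analysis to $g\cdot f_i/f_i$. With that ingredient added your argument closes; without it, the proof as written only covers the case $\mathbb C[X]^\times=\mathbb C^\times$ (which does happen to suffice for the paper's application to $X=G$ simply connected).
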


\begin{proof}[Proof of Theorem~\ref{th1}]
Since~$G$ is simply connected, the algebra~$\mathbb{C}[G]$ is
factorial (see~\cite{11}, Corollary from Proposition~1). Let
$H=H_u\leftthreetimes R$ be a Levi decomposition of~$H$, where $H_u$
(resp. $R$) is the unipotent radical (resp. a maximal reductive
subgroup) of $H$. Then $H_0=H_u\leftthreetimes [R,R]$ where $[R,R]$
is the derived subgroup of~$R$. Since~$H$ is connected, it follows
that $R$ is also connected, whence the group $[R,R]$ is either
connected and semisimple or trivial. Therefore~$H_0$ is connected
and has no non-trivial characters, so the group $U\times H_0$ also
possesses these properties. Hence one can apply Theorem~\ref{th2} to
the action of~$U\times H_0$ on~$G$, where $U$ and~$H_0$ act on~$G$
by left and right multiplication, respectively. Thus the algebra~$A$
is factorial. Let $\{\mu_i\}$ be the set of all indecomposable
elements of $\widehat\Gamma(G/H)$. As mentioned above, the
sphericity of~$H$ implies that for every element~$\mu_i$ there is a
unique, up to multiplication by a non-zero constant, weight function
$f_i\in A$ of weight~$\mu_i$. Moreover, $f_i$ is an irreducible
element of~$A$. Assume that there is a non-trivial relation
$\sum_ik_i\mu_i=\sum_jl_j\mu_j$ for some integers $k_i,l_j>0$. It
implies the relation $c\prod_i f_i^{k_i}=\prod_j f_j^{l_j}$ for some
$c\in\mathbb{C}^{\times}$, which contradicts the factoriality
of~$A$.
\end{proof}

\subsection{}
\label{ssec1.4} A direct product of spherical homogeneous spaces
$$
(G_1/H_1)\times (G_2/H_2)=(G_1\times G_2)/(H_1\times H_2)
$$
is again a spherical homogeneous space. Spaces of this kind, as well
as spaces locally isomorphic to them are called \textit{reducible},
all others are said to be \textit{irreducible}. A spherical space
$G/H$ is said to be \textit{strictly irreducible} if the spherical
space $G/N(H)^0$ is irreducible, see~\cite{12}, 1.3.6. (Here
$N(H)^0$ is the connected component of the identity of the
normalizer of~$H$ in~$G$.)

\renewcommand{\textfraction}{0}
\renewcommand{\bottomfraction}{1}
\renewcommand{\topfraction}{1}
\begin{table}[h]
\renewcommand{\arraystretch}{1.2}
\renewcommand{\tabcolsep}{1pt}
\begin{center}
\caption{} \vskip3mm \label{tab1} \footnotesize{
\begin{tabular}{|c|c|c|c|c|c|}
\hline
No. & $G\supset H$ & \begin{tabular}{c} \scriptsize{Embedding}\\[-1mm] \scriptsize{diagram} \end{tabular}
&$\rk \widehat\Gamma(G/H)$ &$\widehat\Gamma(G/H)$ &
\begin{tabular}{c} \scriptsize{Note} \end{tabular}
\\
\hline 1 &$\oSL_n{\times} \oSL_{n+1}\supset\oSL_n{\times}\mathbb
C^{\times}$ &\begin{tabular}{c}\begin{picture}(13,23)
\put(2,18){\circle{3}} \put(2,15){\line(0,-1){10}}
\put(2,2){\circle*{3}} \put(4,5){\line(1,2){5}}
\put(11,18){\circle{3}}\put(11,15){\line(0,-1){10}}
\put(11,2){\circle*{3}}
\end{picture}
\end{tabular}
&$2n$
&\begin{tabular}{c} $(\varphi_1,n\chi_0)$,\\
$(\pi_{n-1}{+}\varphi_2,(n{-}1)\chi_0)$,\\
$\dots,(\pi_1{+}\varphi_n,\chi_0)$,\\
$(\pi_{n-1}{+}\varphi_1,-\chi_0),\dots,$\\
$(\pi_1{+}\varphi_{n-1},-(n{-}1)\chi_0)$,\\
$(\varphi_n,-n\chi_0)$
\end{tabular}
&$n\geqslant 2$
\\
\hline 2 &$\Spin_n{\times} \Spin_{n+1}\supset\Spin_n$
&\begin{tabular}{c}
\begin{picture}(22,23)
\put(2,18){\circle{3}} \put(4,15){\line(1,-2){5}}
\put(11,2){\circle*{3}} \put(13,5){\line(1,2){5}}
\put(20,18){\circle{3}}
\end{picture}
\end{tabular}
&$n$ &\begin{tabular}{c}
$\varphi_1{+}\varphi_2$, $\pi_1{+}\varphi_1$, $\pi_1{+}\varphi_2$\\
for $n=3$\\
\hline
$\varphi_1$, $\pi_1{+}\varphi_1$, $\pi_1{+}\varphi_2$,\\
$\pi_2{+}\varphi_2,\dots,\pi_{k-2}{+}\varphi_{k-1}$,\\
$\pi_{k-1}{+}\pi_k{+}\varphi_{k-1},$\\
$\pi_{k-1}{+}\varphi_k$, $\pi_k{+}\varphi_k$\\
for $n=2k$\\
\hline
$\varphi_1$, $\pi_1{+}\varphi_1$, $\pi_1{+}\varphi_2$,\\
$\pi_2{+}\varphi_2, \dots,\pi_{k-1}{+}\varphi_{k-1}$,\\
$\pi_{k-1}{+}\varphi_k{+}\varphi_{k+1}$,\\
$\pi_k{+}\varphi_k$, $\pi_k{+}\varphi_{k+1}$\\
for $n=2k+1{\ge}5$
\end{tabular}
&$n\geqslant 3$
\\
\hline 3 &\begin{tabular}{l}
$\oSL_n{\times} \oSp_{2m}$ \\
$\qquad\supset\mathbb C^{\times}{\cdot} \oSL_{n-2}{\times}
\oSL_2{\times} \oSp_{2m-2}$
\end{tabular}
&\begin{tabular}{c}
\begin{picture}(40,23)
\put(2,2){\circle*{3}} \put(4,5){\line(1,2){5}}
\put(11,2){\circle*{3}} \put(11,5){\line(0,1){10}}
\put(11,18){\circle{3}}\put(13,15){\line(1,-2){5}}
\put(20,2){\circle*{3}} \put(22,5){\line(1,2){5}}
\put(29,18){\circle{3}}\put(31,15){\line(1,-2){5}}
\put(38,2){\circle*{3}}
\end{picture}
\end{tabular}
&$6^*$ &\begin{tabular}{c}
$(\pi_{n-2},2\chi_0),$\\
$(\varphi_2,0)$\,($m{\ge}2$),\\
$(\pi_{n-1}{+}\varphi_1,\chi_0)$,\\
$(\pi_1{+}\pi_{n-1},0)$\,($n{\ge}4$),\\
$(\pi_1{+}\varphi_1,-\chi_0),$\\
$(\pi_2,-2\chi_0)$\\
\end{tabular}
&\begin{tabular}{c}
$n\ge3$\\
$m\ge1$
\end{tabular}
\\
\hline 4 &\begin{tabular}{l}
$\oSL_n{\times} \oSp_{2m}$\\
$\qquad\quad\ \ \supset\oSL_{n-2}{\times} \oSL_2{\times}
\oSp_{2m-2}$
\end{tabular}
&\begin{tabular}{c}
\begin{picture}(40,23)
\put(2,2){\circle*{3}} \put(4,5){\line(1,2){5}}
\put(11,18){\circle{3}}\put(13,15){\line(1,-2){5}}
\put(20,2){\circle*{3}} \put(22,5){\line(1,2){5}}
\put(29,18){\circle{3}}\put(31,15){\line(1,-2){5}}
\put(38,2){\circle*{3}}
\end{picture}
\end{tabular}
&$6^*$ &\begin{tabular}{c}
$\pi_{n-2}$, $\varphi_2$\,($m{\ge}2$),\\
$\pi_{n-1}{+}\varphi_1$, $\pi_1{+}\pi_{n-1}$,\\
$\pi_1{+}\varphi_1$, $\pi_2$
\end{tabular}
&\begin{tabular}{c}
$n\ge5$ \\
$m\ge1$
\end{tabular}
\\
\hline 5
&\begin{tabular}{l}$\oSp_{2n}{\times} \oSp_{2m}$\\
$\qquad\quad\ \supset\oSp_{2n-2}{\times} \oSp_2{\times} \oSp_{2m-2}$
\end{tabular}
&\begin{tabular}{c}
\begin{picture}(40,23)
\put(2,2){\circle*{3}} \put(4,5){\line(1,2){5}}
\put(11,18){\circle{3}}\put(13,15){\line(1,-2){5}}
\put(20,2){\circle*{3}} \put(22,5){\line(1,2){5}}
\put(29,18){\circle{3}}\put(31,15){\line(1,-2){5}}
\put(38,2){\circle*{3}}
\end{picture}
\end{tabular}
&$3^*$ &\begin{tabular}{c} $\pi_2$\,($n{\ge}2$),
$\varphi_2$\,($m{\ge}2$),
\\
$\pi_1{+}\varphi_1$
\end{tabular}
&\begin{tabular}{c}
$n\ge1$\\
$m\ge1$
\end{tabular}
\\
\hline 6 &$\oSp_{2n}{\times} \oSp_4\supset\oSp_{2n-4}{\times}
\oSp_4$ &\begin{tabular}{c}
\begin{picture}(13,23)
\put(2,18){\circle{3}} \put(2,15){\line(0,-1){10}}
\put(2,2){\circle*{3}} \put(4,15){\line(1,-2){5}}
\put(11,18){\circle{3}}\put(11,15){\line(0,-1){10}}
\put(11,2){\circle*{3}}
\end{picture}
\end{tabular}
& $6^*$ &\begin{tabular}{c}
$\pi_1{+}\varphi_1$, $\pi_2{+}\varphi_2$,\\
$\pi_3{+}\varphi_1$, $\pi_4$\,($n{\ge}4$),\\
$\pi_2$, $\pi_1{+}\pi_3{+}\varphi_2$
\end{tabular}
&$n\ge3$
\\
\hline 7 &\begin{tabular}{l} $\oSp_{2n}{\times} \oSp_{2m}{\times}
\oSp_{2l}$\\
${\supset}\oSp_{2n-2}\!{\times} \oSp_{2m-2}\!{\times}
\oSp_{2l-2}\!{\times} \oSp_2$
\end{tabular}
&\begin{tabular}{c}
\begin{picture}(22,39)
\put(2,2){\circle*{3}} \put(2,5){\line(0,1){10}}
\put(2,18){\circle{3}} \put(4,21){\line(1,2){5}}
\put(11,34){\circle*{3}} \put(11,31){\line(0,-1){10}}
\put(11,18){\circle{3}} \put(11,15){\line(0,-1){10}}
\put(11,2){\circle*{3}} \put(13,31){\line(1,-2){5}}
\put(20,18){\circle{3}} \put(20,15){\line(0,-1){10}}
\put(20,2){\circle*{3}}
\end{picture}
\end{tabular}
& $6^*$ &\begin{tabular}{c} $\pi_2$\,($n{\ge}2$),
$\varphi_2$\,($m{\ge}2$),\\
$\psi_2$\,($l{\ge}2$), $\pi_1{+}\varphi_1$,\\
$\varphi_1{+}\psi_1$, $\pi_1{+}\psi_1$
\end{tabular}
&\begin{tabular}{c}
$n\ge1$\\
$m\ge1$\\
$l\ge1$
\end{tabular}
\\
\hline 8 &\begin{tabular}{l}
$\oSp_{2n}{\times} \oSp_4{\times} \oSp_{2m}$\\
$\ \ \supset\oSp_{2n-2}{\times} \oSp_2{\times} \oSp_2{\times}
\oSp_{2m-2}$
\end{tabular}
&\begin{tabular}{c}
\begin{picture}(58,23)
\put(2,2){\circle*{3}} \put(4,5){\line(1,2){5}}
\put(11,18){\circle{3}}\put(13,15){\line(1,-2){5}}
\put(20,2){\circle*{3}} \put(22,5){\line(1,2){5}}
\put(29,18){\circle{3}}\put(31,15){\line(1,-2){5}}
\put(38,2){\circle*{3}} \put(40,5){\line(1,2){5}}
\put(47,18){\circle{3}}\put(49,15){\line(1,-2){5}}
\put(56,2){\circle*{3}}
\end{picture}
\end{tabular}
& $6^*$ &\begin{tabular}{c}
$\varphi_2$, $\pi_1{+}\varphi_1$, $\varphi_1{+}\psi_1$,\\
$\pi_2$\,($n{\ge}2$), $\psi_2$\,($m{\ge}2$),\\
$\pi_1{+}\varphi_2{+}\psi_1$
\end{tabular}
&\begin{tabular}{c}
$n\ge1$\\
$m\ge1$
\end{tabular}
\\
\hline
\end{tabular}
}
\end{center}
\end{table}

We now formulate the main results of this paper. First, we compute
the semigroups $\widehat\Gamma(G/H)$ for all strictly irreducible
spherical pairs $(G,H)$, where $G$ is a simply connected non-simple
semisimple algebraic group, $H$ its connected reductive subgroup.
All such pairs except for symmetric pairs, which are to be discussed
later, are listed in~Table~\ref{tab1}. The indecomposable elements
of the corresponding extended weight semigroups are indicated in the
column `$\widehat\Gamma(G/H)$' of~Table~\ref{tab1}. Second, for each
space $G/H$ in Table~\ref{tab1} we find the weight functions in~$A$
that correspond to the indecomposable elements of
$\widehat\Gamma(G/H)$. These functions freely generate the
algebra~$A$.

Having known the semigroups $\widehat\Gamma(G/H)$ for the spaces in
Table~\ref{tab1} and taking into account Remark~\ref{rm1}, one can
obtain a description of the semigroups $\Gamma(G/H)$ for these
spaces. In~particular, $\widehat\Gamma(G/H)=\Gamma(G/H)$ for
spaces~2,~4--8 in~Table~\ref{tab1}.

Every simply connected strictly irreducible spherical homogeneous
space of a non-simple semisimple algebraic group $G$ that is
symmetric is isomorphic to one of the spaces of the form
$X(H)=(H\times H)/H$ (the subgroup~$H$ is embedded diagonally),
where~$H$ is simple and simply connected. For spherical homogeneous
spaces of this kind, the structure of the semigroup
$\widehat\Gamma(X(H))=\Gamma(X(H))$ is well known and follows, for
instance, from Theorem~\ref{th5} (see~\S\,\ref{ssec2.1} below) with
$L=K=H$. Namely, this semigroup is freely generated by the elements
$\pi_i+\varphi_i^*$, $i=1,\dots,\rk H$, where~$\pi_i$ and
$\varphi_i$ are the fundamental weights of the first and second
factors of $H\times H$, respectively. The asterisk denotes the
highest weight of the dual representation.

For computation of the extended weight semigroups, two different
approaches are used in this paper. The first one is applied to
spaces~1 and~2 in Table~\ref{tab1}, and the second to all the
others.

The paper is organized as follows. In~\S\,\ref{sec2} we formulate
and prove all the statements needed for our approaches for
computation of the extended weight semigroups. In~\S\,\ref{sec3} we
compute the semigroups $\widehat\Gamma(G/H)$ for each of the
homogeneous spaces $G/H$ appearing in Table~\ref{tab1}. We also find
the weight functions in~$A$ corresponding to the indecomposable
elements of the respective semigroups~$\widehat\Gamma(G/H)$.

Let us explain the notation used in~Table~\ref{tab1}. The first two
columns of this table are taken from Table~2 in~\cite{12} but are
represented in a form which is more convenient for our purpose. The
dot between the first and the second factor of~$H$ in row~3 denotes
their almost direct product, that is, these two factors have a
non-trivial (but finite) intersection.

The embedding diagram describes the embedding of~$H$ in~$G$ and
ought to be interpreted as follows. The white nodes correspond to
the factors of the group~$G$ and the black ones to those of~$H$. The
order of nodes is the same as that of the corresponding factors,
except for no.~7 where the upper black node corresponds to the last
factor of $H$. The factor of~$H$ corresponding to a black node~$v$
is diagonally embedded in the product of the factors of~$G$ that
correspond to the white nodes joined with~$v$.

The column `$\rk \widehat\Gamma(G/H)$' shows the rank of the
semigroup $\widehat\Gamma(G/H)$, that is, the number of
indecomposable elements of~$\widehat\Gamma(G/H)$. In this column, an
asterisk stands for the cases when, for several small values of the
parameters~$n$, $m$,~$l$, the rank of the extended weight semigroup
of the corresponding homogeneous space is less than the value
indicated in the table. The exact value of the rank for given values
of the parameters can be determined using the information in the
column `$\widehat\Gamma(G/H)$'.

The column `$\widehat\Gamma(G/H)$' contains a list of all
indecomposable elements $(\lambda,\chi)$ of the semigroup
$\widehat\Gamma(G/H)$. (If $\mathfrak{X}(H)=0$, then we write simply
$\lambda$ instead of $(\lambda,0)$.) These elements freely generate
it. If $(G,H)$ is a pair in Table~\ref{tab1} with
$\mathfrak{X}(H)\ne0$, then $H/H_0$ is a one-dimensional torus. Each
character $\chi\in\mathfrak{X}(H)$ is identified with some character
of this torus and, therefore, the characters of~$H$ are written in
the column `$\widehat\Gamma(G/H)$' as multiples of the
character~$\chi_0$, where $\chi_0$ is a fixed basis character
of~$H/H_0$. In each case, an explicit expression for~$\chi_0$ can be
found in~\S\,\ref{sec3}. Whenever an element~$(\lambda,\chi)$ is
followed by parenthesis containing an inequality for one of the
parameters~$n$, $m$,~$l$, the weight~$(\lambda,\chi)$ is contained
in the set of indecomposable elements of~$\widehat\Gamma(G/H)$ if
and only if the corresponding parameter satisfies that inequality.

\section*{Some notation and conventions}

If the group~$G$ (resp.~$H_0$) is a product of several factors, then
the $i$-th factor is denoted by~$G_i$ (resp.~$H_i$). We write $B_i$,
$U_i$, and $T_i$ respectively for the Borel subgroup, the maximal
unipotent subgroup, and the maximal torus of~$G_i$ such that
$B=\prod B_i$, $U=\prod U_i$, and $T=\prod T_i$. By~$\pi_i$,
$\varphi_i$, and~$\psi_i$ we denote the $i$-th fundamental weight of
the first, the second, and the third factor of~$G$, respectively.

Our numeration of fundamental weights of simply connected simple
algebraic groups is the same as in the book~\cite{13}.

For every semisimple group~$L$, we denote by~$V_\lambda(L)$ the
irreducible $L$-module with highest weight~$\lambda$. The weight
dual to the weight~$\lambda$ is denoted by~$\lambda^*$.

The identity element of any group is denoted by~$e$.

By~$\mathbb{C}^\times$ we denote the multiplicative group of the
field~$\mathbb{C}$.

If $P$ is a matrix, then the equation $P=(p_{ij})$ means that
$p_{ij}$ is the element in the~$i$-th row and the $j$-th column
of~$P$.

Given elements $a_1,\dots,a_n$ of a group, we write $S\langle
a_1,\dots\allowbreak\dots,a_n\rangle$ (resp. $\langle
a_1,\dots,a_n\rangle$) for the subsemigroup with identity (resp. the
subgroup) generated by $a_1,\dots,a_n$.

We denote by~$E_n$ the identity matrix of order~$n$ and by $F_n$ the
matrix of order~$n$ with ones on the antidiagonal and zeros
elsewhere.

The basis $e_1, \ldots, e_n$ of the space of the tautological linear
representation of the group $\oSp_{2m}$ is supposed to be chosen in
such a way that the matrix of the invariant non-degenerate
skew-symmetric bilinear form~$\omega_{2m}$ is
$$
\Omega_{2m}=
\begin{pmatrix}
0 & F_m
\\
-F_m & 0
\end{pmatrix}.
$$
With this choice of the basis, the Borel subgroup and the maximal
unipotent subgroup of~$\oSp_{2m}$ are represented by
upper-triangular matrices.

\section{Auxiliary results}
\label{sec2}

\subsection{}
\label{ssec2.1} Theorems~\ref{th3}--\ref{th5} are used to compute
the extended weight semigroups of spaces~1 and~2 in
Table~\ref{tab1}. Theorems~\ref{th3},~\ref{th4} describe known
branching rules for the groups $\oSL_{n+1}$, $\Spin_{n+1}$,
respectively (see original papers~\cite{14},~\cite{15} or a modern
exposition in~\cite{16}) stated in a form which is convenient for
our purpose.

\begin{theorem}[{\rm branching rule for the group $\oSL_{n+1}$}]
\label{th3} Suppose
$\lambda=c_1\pi_1+\dotsb\allowbreak\dots+c_n\pi_n$ is a dominant
weight of $\oSL_{n+1}$, where $c_i\ge0$. Then the restriction of the
irreducible representation of\, $\oSL_{n+1}$ with highest
weight~$\lambda$ to the subgroup $\oSL_n\subset\oSL_{n+1}$ has the
form $\bigoplus \limits_{\mu\in M(\lambda)}V_\mu(\oSL_n)$, where the
set~$M(\lambda)$ consists of dominant weights $\mu$ of\, $\oSL_n$
\textup{(}possibly with multiplicities\textup{)} that can be
obtained from~$\lambda$ by simultaneously replacing all summand of
the form $c_i\pi_i$ by $a_i\pi_{i-1}+b_i\pi_i$, where $a_i,b_i\ge0$
and $a_i+b_i=c_i$. At that, we put $\pi_0=0$ and
$\pi_n\big|_{\oSL_n}=0$.
\end{theorem}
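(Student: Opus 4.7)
The plan is to deduce Theorem~\ref{th3} directly from the classical Gelfand--Tsetlin branching rule for $\operatorname{GL}_{n+1}\downarrow\operatorname{GL}_n$: the substantive combinatorics already live there, and what remains is a translation into the fundamental-weight language of the statement.

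First, I would lift the dominant $\oSL_{n+1}$-weight $\lambda=c_1\pi_1+\dots+c_n\pi_n$ to a dominant weight of $\operatorname{GL}_{n+1}$ by choosing the partition
\[
\widetilde{\lambda}=(\lambda_1,\dots,\lambda_n,0),\qquad \lambda_k=c_k+c_{k+1}+\dots+c_n.
\]
As $\oSL_{n+1}$-modules, $V_{\widetilde{\lambda}}$ and $V_\lambda$ coincide (the center of $\operatorname{GL}_{n+1}$ acts on $V_{\widetilde{\lambda}}$ by a scalar). The classical branching rule asserts that
\[
V_{\widetilde{\lambda}}\big|_{\operatorname{GL}_n}=\bigoplus_{\widetilde{\mu}}V_{\widetilde{\mu}}(\operatorname{GL}_n),
\]
with $\widetilde{\mu}=(\mu_1,\dots,\mu_n)$ running over all integer sequences satisfying the interlacing $\lambda_1\ge\mu_1\ge\lambda_2\ge\mu_2\ge\dots\ge\lambda_n\ge\mu_n\ge 0$, each appearing with multiplicity one.

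Second, I would recast the interlacing in the theorem's notation by setting
\[
a_k:=\lambda_k-\mu_k,\qquad b_k:=\mu_k-\lambda_{k+1},\qquad k=1,\dots,n,
\]
with the convention $\lambda_{n+1}=0$. The interlacing is equivalent to $a_k,b_k\ge 0$, and one immediately reads off $a_k+b_k=\lambda_k-\lambda_{k+1}=c_k$. For $1\le k\le n-1$, the coefficient of $\pi_k$ in the $\oSL_n$-projection $\mu$ of $\widetilde{\mu}$ is
\[
\mu_k-\mu_{k+1}=(\lambda_k-a_k)-(\lambda_{k+1}-a_{k+1})=b_k+a_{k+1},
\]
which is exactly the coefficient obtained by performing the substitutions $c_i\pi_i\mapsto a_i\pi_{i-1}+b_i\pi_i$ simultaneously in $\lambda$, under the conventions $\pi_0=0$ and $\pi_n\big|_{\oSL_n}=0$.

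Third, restricting each $V_{\widetilde{\mu}}(\operatorname{GL}_n)$ further to $\oSL_n$ gives one copy of $V_\mu(\oSL_n)$, and assembling these over all admissible $\widetilde{\mu}$ yields the decomposition claimed in the theorem. The only point requiring genuine care---and what I would flag as the main (though modest) obstacle---is multiplicity bookkeeping: several admissible tuples $(a_i,b_i)$, equivalently several $\widetilde{\mu}$'s that differ by a shift of the unconstrained parameter $\mu_n=b_n$, can project to the same $\oSL_n$-weight $\mu$, and one must check that the multiplicity of $\mu$ produced by the classical rule genuinely matches the number of occurrences of $\mu$ in the multiset $M(\lambda)$ prescribed by the substitutions---neither overcounting nor undercounting.
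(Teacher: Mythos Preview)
Your proposal is correct, and it is precisely the approach the paper has in mind: the paper does not actually prove Theorem~\ref{th3} but simply states it as a reformulation of the classical Gel'fand--Tsetlin branching rule, citing \cite{14} and \cite{16}. Your translation via the lift $\widetilde\lambda=(\lambda_1,\dots,\lambda_n,0)$ with $\lambda_k=c_k+\dots+c_n$ and the substitution $a_k=\lambda_k-\mu_k$, $b_k=\mu_k-\lambda_{k+1}$ is exactly the dictionary that turns the interlacing condition into the substitution rule of the statement.

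One small remark on the multiplicity bookkeeping you flag as the main obstacle: it is in fact automatic and requires no further checking. The map $(a_1,b_1,\dots,a_n,b_n)\leftrightarrow\widetilde\mu$ is a bijection between admissible tuples and interlacing $\operatorname{GL}_n$-weights, and each $\widetilde\mu$ occurs with multiplicity one in the $\operatorname{GL}$-branching. Hence the multiplicity of a given $\oSL_n$-weight $\mu$ in the restriction equals the number of $\widetilde\mu$'s projecting to $\mu$, which equals the number of tuples producing $\mu$, which is by definition the multiplicity of $\mu$ in the multiset $M(\lambda)$. There is no risk of over- or undercounting.
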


Before we formulate the next theorem, let us note that every
dominant weight~$\lambda$ of the group $\Spin_{2k+2}$ is uniquely
expressible in the form $\lambda=c_1\pi_1
+\dotsb\allowbreak\dots+c_{k+1}\pi_{k+1}+d(\pi_k+\pi_{k+1})$, where
$c_i,d\ge0$ and at least one of the numbers~$c_k$, $c_{k+1}$ is
zero.

\begin{theorem}[{\rm branching rules for the group $\Spin_{n+1}$}]
\label{th4} {\rm a)} Suppose $\lambda\,{=}\,
c_1\pi_1+\nobreak\dotsb\allowbreak\dots+c_k\pi_k$ is a dominant
weight of\, $\Spin_{2k+1}$, $k\ge2$, where $c_i\ge0$. Then the
restriction of the irreducible representation of\; $\Spin_{2k+1}$
with highest weight~$\lambda$ to the subgroup $\Spin_{2k}$ has the
form $\bigoplus \limits_{\mu\in M(\lambda)}V_\mu(\Spin_{2k})$, where
the set~$M(\lambda)$ consists of dominant weights~$\mu$ of
$\Spin_{2k}$ that can be obtained from~$\lambda$ by simultaneously
replacing all summands of the form~$c_i\pi_i$ \textup{(}for $i\ne
k-1$\textup{)} by $a_i\pi_{i-1}+b_i\pi_i$ and the summand
$c_{k-1}\pi_{k-1}$ by $a_{k-1}\pi_{k-2}+b_{k-1}(\pi_{k-1}+\pi_k)$,
where $a_i,b_i\ge0$ and $a_i+b_i=c_i$. At that, we put $\pi_0=0$.

{\rm b)} Suppose
$\lambda=c_1\pi_1+\cdots+c_{k+1}\pi_{k+1}+d(\pi_k+\pi_{k+1})$ is a
dominant weight of\, $\Spin_{2k+2}$, $k\ge1$, where $c_i,d\ge0$ and
at least one of the numbers~$c_k$, $c_{k+1}$ is zero. Then the
restriction of the irreducible representation of\, $\Spin_{2k+2}$
with highest weight~$\lambda$ to the subgroup $\Spin_{2k+1}$ has the
form $\bigoplus \limits_{\mu\in M(\lambda)}V_\mu(\Spin_{2k+1})$,
where the set~$M(\lambda)$ consists of dominant weights~$\mu$ of
$\Spin_{2k+1}$ that can be obtained from~$\lambda$ by simultaneously
replacing all summands of the form~$c_i\pi_i$ \textup{(}for $i<
k$\textup{)} by $a_i\pi_{i-1}+b_i\pi_i$, where $a_i,b_i\ge0$ and
$a_i+b_i=c_i$, all summands of the form~$c_i\pi_i$ \textup{(}for
$i=k,k+1$\textup{)} by $c_i\pi_k$, and the summand
$d(\pi_k+\pi_{k+1})$ by $a\pi_{k-1}+2b\pi_k$, where $a,b\ge0$ and
$a+b=d$. At that, we put $\pi_0=0$.
\end{theorem}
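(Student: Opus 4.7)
The plan is to reduce to the classical form of the branching rules $\Spin_{2k+1}\downarrow\Spin_{2k}$ and $\Spin_{2k+2}\downarrow\Spin_{2k+1}$, whose statements in terms of orthogonal coordinates of highest weights are well known (see~\cite{14}--\cite{16}). Concretely, write every dominant weight in the standard $\epsilon$-coordinates: for $\Spin_{2k+1}$ one has $\pi_i=\epsilon_1+\dots+\epsilon_i$ for $i<k$ and $\pi_k=\tfrac12(\epsilon_1+\dots+\epsilon_k)$, while for $\Spin_{2k}$ one has the same formulas for $i\le k-2$ together with the two half-spin weights $\pi_{k-1}=\tfrac12(\epsilon_1+\dots+\epsilon_{k-1}-\epsilon_k)$ and $\pi_k=\tfrac12(\epsilon_1+\dots+\epsilon_{k-1}+\epsilon_k)$, and similarly for $\Spin_{2k+2}$. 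In these coordinates, the classical branching rule asserts that the restrictions are multiplicity free and that $V_\mu$ occurs in $V_\lambda$ if and only if the interlacing
\[
\lambda_1\ge\mu_1\ge\lambda_2\ge\mu_2\ge\dots\ge\lambda_{k-1}\ge\mu_{k-1}\ge\lambda_k\ge|\mu_k|
\]
holds in case~(a), and an analogous interlacing $\lambda_1\ge\mu_1\ge\dots\ge\lambda_k\ge\mu_k\ge|\lambda_{k+1}|$ holds in case~(b).

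With this in hand, the two tasks are: (i)~rewrite the interlacing in terms of the integer parameters $a_i, b_i$ (and $a, b$ in part~(b)) of the statement, and (ii)~check that the substitutions described in the theorem bijectively parameterize the admissible $\mu$'s. For (i), the definition $\pi_i=\epsilon_1+\dots+\epsilon_i$ implies that replacing $c_i\pi_i$ by $a_i\pi_{i-1}+b_i\pi_i$ with $a_i+b_i=c_i$ is exactly the assertion that the $\epsilon_i$-coefficient of $\mu$ is allowed to take any integer value in $[0,c_i]$, which is the local interlacing condition $\lambda_i\ge\mu_i\ge\lambda_{i+1}$ after accumulating differences. This verification is a direct, if slightly tedious, telescoping computation.

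The only genuinely delicate point is the behaviour near the ``spin end'' of the Dynkin diagram, where the half-integer coordinates of the spin and half-spin weights force an asymmetric treatment of the last one or two fundamental weights; this is precisely what the special substitution rules for $c_{k-1}\pi_{k-1}$ in~(a) and for $c_i\pi_i$ with $i=k,k+1$ together with the summand $d(\pi_k+\pi_{k+1})$ in~(b) encode. In~(a), the transition from $B_k$ to $D_k$ splits the single spin weight $\pi_k$ into the two half-spin weights, so the coefficient $b_{k-1}$ attached to $\pi_{k-1}+\pi_k$ registers the unique integer $\mu$-coordinate in the $\epsilon_{k-1}$ slot while the coefficient~$c_k$ itself determines $|\mu_k|$. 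In~(b), the condition that $c_k$ and $c_{k+1}$ cannot both be nonzero (the rewriting of an arbitrary $D_{k+1}$-weight) singles out one half-spin weight, and the parameter $d$ measures the ``mixed'' part $\pi_k+\pi_{k+1}$; checking that the resulting substitutions reproduce the inequalities $\lambda_k\ge\mu_k\ge|\lambda_{k+1}|$ is the main combinatorial step. Once these boundary cases are matched, the theorem follows immediately from the classical branching rule; I expect no obstacles beyond careful bookkeeping.
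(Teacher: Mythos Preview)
The paper does not prove Theorem~\ref{th4} at all: it is stated as a known result, with references to the original Gel'fand--Tsetlin papers~\cite{14},~\cite{15} and to~\cite{16}, and is merely reformulated in terms of fundamental weights ``in a form which is convenient for our purpose.'' Your proposal --- translating the fundamental-weight description into $\epsilon$-coordinates and checking that the substitution rules match the classical interlacing conditions --- is exactly the verification one would carry out to confirm that this reformulation is correct, so your approach is appropriate and in the same spirit as the paper's (implicit) treatment.
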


\begin{theorem}
\label{th5} Suppose~$L$, $K$ are connected semisimple algebraic
groups and $L\subset K$. Let $m_{\lambda,\mu}$ be the multiplicity
with which the irreducible representation of\,~$L$ with highest
weight~$\mu$ occurs in the irreducible representation of~$K$ with
highest weight~$\lambda$. Consider the group $G=L\times K$ and its
subgroup $H\simeq L$ embedded in~$G$ diagonally. Then there is an
isomorphism of $G$-modules $\mathbb{C}[G]^H\simeq \bigoplus
\limits_{\lambda,\mu} m_{\lambda,\mu} V_{\mu+\lambda^*}(G)$
\textup{(}$G$ acts on~$\mathbb{C}[G]^H$ by left
multiplication\textup{)}, where~$\lambda$, $\mu$ run over all
dominant weights of\,~$K$, $L$, respectively.
\end{theorem}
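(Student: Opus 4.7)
The plan is to combine the Peter-Weyl decomposition of $\mathbb{C}[G]$ with a standard tensor-invariants identity in order to express the multiplicities of irreducibles in $\mathbb{C}[G]^H$ in terms of the branching numbers $m_{\lambda,\mu}$.

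First I would write down the Peter-Weyl isomorphism for $G = L \times K$ as a $G \times G$-module (with the first factor of $G\times G$ acting on $G$ by left multiplication and the second by right multiplication):
$$\mathbb{C}[G] \simeq \bigoplus_{\mu_1,\lambda_1} \bigl(V_{\mu_1}(L)^* \otimes V_{\lambda_1}(K)^*\bigr) \otimes \bigl(V_{\mu_1}(L) \otimes V_{\lambda_1}(K)\bigr),$$
the sum running over dominant weights $\mu_1$ of~$L$ and $\lambda_1$ of~$K$. Under the diagonal embedding $H \simeq L \hookrightarrow G$, $h \mapsto (h,h)$, the right action of~$H$ is the diagonal action of~$L$ on the rightmost tensor product, where $L$ acts on $V_{\lambda_1}(K)$ through its restriction from~$K$.

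Second, taking $H$-invariants commutes with the two left factors (which carry the left $G$-action), giving
$$\mathbb{C}[G]^H \simeq \bigoplus_{\mu_1,\lambda_1} V_{\mu_1}(L)^* \otimes V_{\lambda_1}(K)^* \otimes \bigl(V_{\mu_1}(L) \otimes V_{\lambda_1}(K)|_L\bigr)^L.$$
The standard identity $(V \otimes W)^L \simeq \operatorname{Hom}_L(V^*,W)$ then yields
$$\dim \bigl(V_{\mu_1}(L) \otimes V_{\lambda_1}(K)|_L\bigr)^L = \dim \operatorname{Hom}_L\bigl(V_{\mu_1^*}(L), V_{\lambda_1}(K)|_L\bigr) = m_{\lambda_1,\mu_1^*}.$$

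Finally, reindexing with $\lambda = \lambda_1$ and $\mu = \mu_1^*$, I would observe that $V_{\mu_1}(L)^* \otimes V_{\lambda_1}(K)^* \simeq V_{\mu}(L) \otimes V_{\lambda^*}(K) = V_{\mu+\lambda^*}(G)$ as a left $G$-module, which converts the above sum into $\bigoplus_{\lambda,\mu} m_{\lambda,\mu}\, V_{\mu+\lambda^*}(G)$, as claimed. The main point requiring care is the bookkeeping of duals and of which tensor factor carries the left versus the right action; once these conventions are pinned down, the result is an immediate consequence of Peter-Weyl, and I foresee no deeper obstacle.
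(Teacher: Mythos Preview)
Your argument is correct and rests on the same core idea as the paper's proof---the Peter--Weyl decomposition followed by identification of branching multiplicities---but the execution differs slightly. The paper first notes that the restriction map $f\mapsto f(e,\cdot)$ gives a $G$-module isomorphism $\mathbb{C}[G]^H\xrightarrow{\sim}\mathbb{C}[K]$ (with $L$ and $K$ acting on $\mathbb{C}[K]$ by one-sided multiplication), and then applies Peter--Weyl to $K$ alone, so that the branching from $K$ to $L$ appears directly without any $\operatorname{Hom}$-identity or reindexing of duals. You instead apply Peter--Weyl to $G=L\times K$ first and take $H$-invariants afterward, which requires the extra step $(V_{\mu_1}(L)\otimes V_{\lambda_1}(K)|_L)^L\simeq\operatorname{Hom}_L(V_{\mu_1^*}(L),V_{\lambda_1}(K)|_L)$ and the substitution $\mu=\mu_1^*$. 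Both routes are short and equivalent; the paper's is marginally more economical, while yours avoids having to verify the explicit isomorphism with $\mathbb{C}[K]$.
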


\begin{proof}
Consider the space~$\mathbb{C}[K]$ as a $G$-module on which~$L$
and~$K$ act by left and right multiplication respectively. There is
an isomorphism of algebras and $G$-modules $\varphi\colon
\mathbb{C}[G]^H \to \mathbb{C}[K]$ given by $(\varphi f)(x)=f(e,x)$.
Further, it is well-known that the space~$\mathbb{C}[K]$, regarded
as a $(K\times K)$-module with respect to the actions by left and
right multiplication, is isomorphic to the direct sum $\bigoplus
\limits_\lambda V_\lambda(K)^*\otimes V_\lambda(K)$, where~$\lambda$
runs over all dominant weights of~$K$. Restricting the action of
~$K$ by right multiplication to~$L$ and taking into account the
relations $V_\lambda(K)^*\simeq V_{\lambda^*}(K)$ and
$V_{\lambda^*}(K)\otimes V_\mu(L) \simeq V_{\lambda^*+\mu}(K\times
L) \simeq V_{\mu+\lambda^*}(G)$, we get the desired result.
\end{proof}

\subsection{}
\label{ssec2.2} The results in this subsection are used for
computation of the extended weight semigroups of spaces~3--8 in
Table~\ref{tab1}.

\begin{lemma}
\label{lem1} Suppose a group~$L$ acts on an irreducible algebraic
variety~$X$. Suppose there is a closed subset $Y\subset X$, which is
called a section, and an open subset $M\subset X$ such that the
orbit of any point in~$M$ meets~$Y$. Then the restriction
homomorphism $\rho\colon \mathbb{C}[X]^L\to\mathbb{C}[Y]$ is
injective.
\end{lemma}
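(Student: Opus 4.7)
The plan is to show that any $f\in\mathbb{C}[X]^L$ with $\rho(f)=0$ must vanish identically on $X$. The proof proceeds in two propagation steps: first from vanishing on $Y$ to vanishing on $M$ via $L$-invariance, then from $M$ to $X$ via irreducibility.

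First I would fix $f\in\mathbb{C}[X]^L$ with $f|_Y=0$ and take an arbitrary point $x\in M$. By the defining property of a section, there exists $g\in L$ with $gx\in Y$. Recalling that the action of $L$ on $\mathbb{C}[X]$ is given by $(g\cdot f)(x)=f(g^{-1}x)$, the $L$-invariance of $f$ means exactly that $f$ is constant along every $L$-orbit. Hence $f(x)=f(gx)=0$, which shows that $f$ vanishes on the entire open subset $M$.

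Second I would use the irreducibility of $X$. If $M$ is empty the statement is vacuous, so we may assume $M$ is non-empty; then $M$ is a non-empty open, hence dense, subset of the irreducible variety $X$. A regular function on $X$ vanishing on a dense subset is identically zero, so $f=0$ and therefore $\ker\rho=0$. The whole lemma is really just an unpacking of the definitions of $L$-invariance and section together with the density of non-empty opens in an irreducible variety, so I do not anticipate any genuine obstacle.
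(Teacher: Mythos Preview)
Your proof is correct and follows exactly the same line as the paper's own argument: invariance along orbits forces $f$ to vanish on $M$, and density of the non-empty open set $M$ in the irreducible variety $X$ then gives $f\equiv 0$. The paper's version is just more terse, omitting the explicit discussion of density and the trivial case $M=\varnothing$.
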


\begin{proof}
Assume that $\rho(f)=0$ for some function $f\in\mathbb{C}[X]^L$.
satisfies . Then $f\big|_M=0$ because invariant functions are
constant along orbits. This implies $f\equiv 0$.
\end{proof}

\begin{theorem}
\label{th6} Suppose the group $\oSp_{2m-2k}$, $k\ge 1$, is embedded
in $\oSp_{2m}$ as the central $(2m-2k)\times(2m-2k)$ block and acts
on $\oSp_{2m}$ by right multiplication. Then the algebra of
invariants of this action coincides with the subalgebra of the
algebra $\mathbb{C}[\oSp_{2m}]$ generated by the matrix entries of
the first~$k$ and last~$k$ columns.
\end{theorem}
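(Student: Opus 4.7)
The plan is to realize the invariant algebra as the coordinate ring of the geometric quotient $\oSp_{2m}/\oSp_{2m-2k}$ under the right action, and to identify this quotient with an explicit closed subvariety $Z \subset \mathbb{C}^{2m\times 2k}$ whose coordinate functions pull back, under the orbit map, to the matrix entries of the first $k$ and last $k$ columns of $\oSp_{2m}$.

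Consider the morphism $\pi\colon \oSp_{2m}\to\mathbb{C}^{2m\times 2k}$ sending a matrix $A$ to the tuple $(v_1,\ldots,v_k,v_{2m-k+1},\ldots,v_{2m})$ of its first $k$ and last $k$ columns. Since the centrally embedded $\oSp_{2m-2k}$ fixes $e_i$ for $i\le k$ and $i\ge 2m-k+1$, the map $\pi$ is constant on right $\oSp_{2m-2k}$-orbits. Conversely, if $\pi(A)=\pi(A')$, then $h:=A^{-1}A'$ fixes these $2k$ basis vectors; being symplectic, $h$ also preserves their symplectic complement $\langle e_{k+1},\ldots,e_{2m-k}\rangle$, so $h$ lies in the centrally embedded $\oSp_{2m-2k}$. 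Hence the fibers of $\pi$ are exactly the right $\oSp_{2m-2k}$-orbits. The equation $A^T\Omega_{2m}A=\Omega_{2m}$ translates into explicit quadratic relations $\omega_{2m}(v_i,v_j)=(\Omega_{2m})_{ij}$ on the $2k$ selected columns, so $\pi(A)$ lies in the closed subvariety $Z\subset\mathbb{C}^{2m\times 2k}$ cut out by these relations (the variety of ``symplectic $2k$-frames''). By Witt's theorem for symplectic forms, every such frame extends to a full symplectic basis of $\mathbb{C}^{2m}$, so $\pi$ surjects onto $Z$.

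The right translation action of $\oSp_{2m-2k}$ on $\oSp_{2m}$ is free and $\oSp_{2m-2k}$ is reductive, so by Matsushima's theorem $\oSp_{2m}/\oSp_{2m-2k}$ is a smooth affine variety and coincides with the categorical quotient, whence its coordinate ring equals $\mathbb{C}[\oSp_{2m}]^{\oSp_{2m-2k}}$. The left $\oSp_{2m}$-action on $Z$ is transitive, and the stabilizer of the standard frame $(e_1,\ldots,e_k,e_{2m-k+1},\ldots,e_{2m})$ is exactly the centrally embedded $\oSp_{2m-2k}$, so $\pi$ descends to an $\oSp_{2m}$-equivariant bijective morphism $\oSp_{2m}/\oSp_{2m-2k}\to Z$ between smooth varieties, which is an isomorphism. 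Thus $\pi^*$ identifies $\mathbb{C}[Z]$ with $\mathbb{C}[\oSp_{2m}]^{\oSp_{2m-2k}}$. Since $Z$ is closed in $\mathbb{C}^{2m\times 2k}$, its coordinate ring is generated by the restrictions of the standard coordinate functions, and these pull back via $\pi^*$ to precisely the matrix entries of the first $k$ and last $k$ columns, which completes the proof.

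The main technical point lies in the last identification: one must verify that $\oSp_{2m}/\oSp_{2m-2k}\to Z$ is an isomorphism of varieties, not merely a bijective morphism. In our setting both sides are smooth and the domain is the quotient of $\oSp_{2m}$ by the free action of a reductive group, and the map is $\oSp_{2m}$-equivariant between homogeneous spaces with matching stabilizers, from which an isomorphism follows by standard principles. An alternative is to invoke Lemma~\ref{lem1}: pick a transversal $Y\subset\oSp_{2m}$ to the $\oSp_{2m-2k}$-orbits, use injectivity of the restriction $\rho\colon\mathbb{C}[\oSp_{2m}]^{\oSp_{2m-2k}}\to\mathbb{C}[Y]$ to reduce the generation statement to a claim on $Y$, and verify it there by a direct calculation exploiting the explicit symplectic conditions on columns.
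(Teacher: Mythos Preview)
Your proof is correct and follows essentially the same approach as the paper: identify $\oSp_{2m}/\oSp_{2m-2k}$ with the closed $\oSp_{2m}$-orbit of the standard frame $(e_1,\ldots,e_k,e_{2m-k+1},\ldots,e_{2m})$ in $V^{\oplus 2k}$ (closed because it is cut out by the Gram-matrix equations), and conclude that its coordinate ring is generated by the ambient coordinates, which pull back to the indicated matrix entries. The paper is more concise because in characteristic zero the orbit map $G\to G\cdot w$ automatically induces an isomorphism $G/G_w\cong G\cdot w$, so your appeals to Matsushima's theorem and the extra verification that the bijection is an isomorphism are not needed; the final aside about Lemma~\ref{lem1} is likewise extraneous and, as sketched, does not by itself yield the generation statement.
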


\begin{proof}
Let $V$ be the space of the tautological representation
of~$\oSp_{2m}$. Consider the natural action of~$\oSp_{2m}$ on the
space
$$
W=\underbrace{V\oplus \cdots \oplus V}_k \oplus \underbrace{V\oplus
\cdots \oplus V}_k.
$$
The subgroup $\oSp_{2m-2k}$ in the hypothesis of the theorem is the
stabilizer of the vector $w=(e_1,e_2,\dots,e_k,
e_{2m-k+1},e_{2m-k+2},\dots,e_{2m})$ under this action. The orbit
of~$w$ in~$W$ is isomorphic to the quotient space
$\oSp_{2m}/\oSp_{2m-2k}$ and this isomorphism induces the
correspondence between regular functions on this orbit and the
required invariants. The orbit is closed since it consists of the
sets of vectors $(v_1,v_2,\dots,v_{2k})$ whose Gram matrix with
respect to the form~$\omega_{2m}$ is~$\Omega_{2k}$. Hence, the
algebra of regular functions on the orbit is generated by the
restrictions of the coordinates of the ambient space~$W$. These
coordinates correspond to the matrix entries of the first~$k$ and
the last~$k$ columns of a matrix in~$\oSp_{2m}$.
\end{proof}

The next three lemmas are technical. The proofs of the first two of
them are obtained by direct computation.

\begin{lemma}
\label{lem2} For every non-degenerate matrix~$P$ of order~$m$, the
matrix
\begin{equation}
\label{eq1}
\begin{pmatrix}
P & 0
\\
0 & (P^{-1})^{\#}
\end{pmatrix}
\end{equation}
of order~$2m$ is symplectic. \textup{(}Here the symbol~$\#$ denotes
the transpose of a matrix with respect to the
antidiagonal.\textup{)}
\end{lemma}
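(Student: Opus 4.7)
The plan is to verify directly that the block matrix $M = \begin{pmatrix} P & 0 \\ 0 & (P^{-1})^{\#} \end{pmatrix}$ satisfies the defining relation $M^T \Omega_{2m} M = \Omega_{2m}$, which expresses symplecticity with respect to the form $\omega_{2m}$ whose Gram matrix is $\Omega_{2m}$.

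The first step is to translate the antidiagonal transpose into the ordinary one via the identity $A^{\#} = F_m A^T F_m$, valid for any $m \times m$ matrix $A$, together with $F_m^2 = E_m$. Applied to $A = P^{-1}$, this gives $(P^{-1})^{\#} = F_m (P^T)^{-1} F_m$, and taking the ordinary transpose yields $\bigl((P^{-1})^{\#}\bigr)^T = F_m P^{-1} F_m$. These two formulas are the only ingredients needed beyond associativity of matrix multiplication.

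The second step is to substitute these into the block product $M^T \Omega_{2m} M$ and simplify, performing one $2 \times 2$ block multiplication at a time. Using $F_m^2 = E_m$, the intermediate product $M^T \Omega_{2m}$ becomes $\begin{pmatrix} 0 & P^T F_m \\ -F_m P^{-1} & 0 \end{pmatrix}$, and multiplying on the right by $M$ produces an off-diagonal block $P^T F_m \cdot (P^{-1})^{\#} = P^T F_m \cdot F_m (P^T)^{-1} F_m = F_m$ in the upper right and $-F_m P^{-1} \cdot P = -F_m$ in the lower left. This reproduces $\Omega_{2m}$ exactly.

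There is essentially no obstacle: the lemma is purely computational, and the only thing to watch is careful bookkeeping of the $F_m$ factors coming from the antidiagonal transpose, together with the cancellation $F_m^2 = E_m$. The non-degeneracy of $P$ enters only to ensure that $P^{-1}$, and hence $(P^{-1})^{\#}$, is defined.
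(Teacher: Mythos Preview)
Your proof is correct and is precisely the direct computation the paper alludes to (the paper merely states that the proof is obtained by direct computation and gives no further details). The key identity $A^{\#}=F_m A^T F_m$ and the cancellation $F_m^2=E_m$ are exactly what make the verification of $M^T\Omega_{2m}M=\Omega_{2m}$ go through, so there is nothing to add.
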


\begin{lemma}
The matrix
\begin{equation}
\label{eq2}
\begin{pmatrix}
E_m & C
\\
0 & E_m
\end{pmatrix}
\end{equation}
of order~$2m$ is symplectic if and only if the matrix~$C$ of
order~$m$ is symmetric with respect to the antidiagonal.
\end{lemma}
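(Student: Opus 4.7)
The plan is to verify symplecticity by direct matrix multiplication, using the defining condition $M^T\Omega_{2m}M=\Omega_{2m}$ for the matrix $M$ in~\eqref{eq2}, and then to translate the resulting condition on the block $C$ into the condition $C=C^\#$ via the identity $C^\#=F_mC^TF_m$.

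First I would compute
$$
M^T\Omega_{2m}M=\begin{pmatrix} E_m & 0 \\ C^T & E_m\end{pmatrix}\begin{pmatrix} 0 & F_m \\ -F_m & 0\end{pmatrix}\begin{pmatrix} E_m & C \\ 0 & E_m\end{pmatrix}.
$$
Carrying out the two block multiplications, the $(1,1)$-, $(1,2)$-, and $(2,1)$-blocks come out automatically to $0$, $F_m$, and $-F_m$, matching those of $\Omega_{2m}$. Hence symplecticity is equivalent to the vanishing of the $(2,2)$-block, which a short computation shows equals $C^TF_m-F_mC$.

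Next I would rewrite this condition. Since $F_m^2=E_m$, multiplying $C^TF_m=F_mC$ on the left by $F_m$ gives $F_mC^TF_m=C$; by definition of the antidiagonal transpose this is precisely $C=C^\#$. Conversely $C=C^\#$ yields $C^TF_m=F_mC$ after multiplying by $F_m$ on the right, so both implications hold.

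No real obstacle arises here: the lemma is a computational verification, and the only potentially subtle point is correctly identifying $C^\#$ with $F_mC^TF_m$, which follows immediately from the fact that conjugation by $F_m$ reverses row and column orders and thus converts ordinary transpose into antidiagonal transpose.
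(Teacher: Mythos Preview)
Your proof is correct and is precisely the ``direct computation'' that the paper alludes to without writing out; the paper gives no further details beyond that phrase. Your identification $C^\#=F_mC^TF_m$ and the resulting equivalence $C^TF_m=F_mC \Leftrightarrow C=C^\#$ are exactly what is needed.
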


\begin{lemma}
\label{lem4} Let $P=(p_{ij})$ be a $2m\times 2$ matrix, $m\ge 2$,
and~$P_1$, $P_2$ its first and second columns, respectively. Suppose
$p_{2m,1}\ne0$, $\Delta=p_{2m-1,1}p_{2m,2}-p_{2m-1,2}p_{2m,1}\ne0$,
and $P_1^\top\Omega_{2m}P_2=1$ \textup{(}the symbol~$\top$ denotes
the transposed matrix\textup{)}. Then there are upper unitriangular
matrices $u_1, u_2\in \oSp_{2m}$ such that
$$
u_1P=\begin{pmatrix} 0 &-\dfrac{1}{p_{2m,1}}
\\
0 & 0
\\
\hdots & \hdots
\\
0 & -\dfrac{\Delta}{p_{2m,1}}
\\
\ p_{2m,1} & \ \ p_{2m,2}\end{pmatrix}, \qquad u_2P=\begin{pmatrix}
0 & 0
\\
-\dfrac{p_{2m,1}}{\Delta} & 0
\\
\hdots & \hdots
\\
0 & -\dfrac{\Delta}{p_{2m,1}}
\\
\ p_{2m,1} & \ \ p_{2m,2}
\end{pmatrix}.
$$
\textup{(}The dots stand for zero entries.\textup{)}
\end{lemma}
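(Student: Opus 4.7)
The plan is a constructive proof. I would build $u_1$ and $u_2$ explicitly as products of the two types of upper unitriangular symplectic matrices provided by Lemmas~2 and~3, and verify the claim by direct matrix multiplication.

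The starting observation is that since $u\in U\cap\oSp_{2m}$ is upper unitriangular, the last row of $uP$ equals the last row of $P$, while the $(2m-1)$-th row of $uP$ depends only on the single entry $u_{2m-1,2m}$, via $(uP)_{2m-1,j} = p_{2m-1,j} + u_{2m-1,2m}\,p_{2m,j}$. Choosing $u_{2m-1,2m} = -p_{2m-1,1}/p_{2m,1}$ (legal because $p_{2m,1}\ne 0$) forces $(uP)_{2m-1,1}=0$ and, by the definition of~$\Delta$, automatically yields $(uP)_{2m-1,2} = -\Delta/p_{2m,1}$. Hence the bottom two rows of $uP$ coincide with those of the target forms for $u_1$ and $u_2$ as soon as this single entry is fixed.

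Writing $A$ and $B$ for the top and bottom $m\times 2$ blocks of $P$, the shears from Lemma~3 act by $A\mapsto A+CB$, $B\mapsto B$ with $C^\#=C$, while the block matrices from Lemma~2 act by $A\mapsto P'A$, $B\mapsto (P'^{-1})^\#B$ with $P'\in\oSL_m$ upper unitriangular. The parameter counts $m(m+1)/2$ and $m(m-1)/2$ sum to $\dim(U\cap\oSp_{2m})=m^2$, and one checks that every element of $U\cap\oSp_{2m}$ factors uniquely as such a shear times a block matrix (reading off $P'$ from the top-left block of a given $u$, and then $C$ from the top-right block). Using these building blocks in two stages, I would first apply an upper unitriangular block transformation to clear the entries of $B$ in rows $m+1,\dots,2m-2$, using the last two rows of $B$ as pivots (legal by the nonvanishing hypotheses on $p_{2m,1}$ and $\Delta$). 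Then an antidiagonal-symmetric shear clears the top block $A$ except for a single residual nonzero entry; its location and value are forced by the preservation of $P_1^\top\Omega_{2m}P_2=1$ together with the identities $\omega(e_{2m},w)=-w_1$ and $\omega(e_2,w)=w_{2m-1}$ that follow from the definition of $\Omega_{2m}$. This produces the $u_1$-target when the construction is arranged so that $u_1P_1$ is proportional to $e_{2m}$, and the $u_2$-target when instead $u_2P_2$ lies in the linear span of $e_{2m-1}$ and $e_{2m}$; the two target forms arise from different pivoting choices in the shear step.

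The main obstacle will be the combinatorial bookkeeping: verifying that the antidiagonal-symmetric constraint $C^\#=C$ is compatible with the simultaneous zeroing of both columns of $A$ after $B$ is in normal form. Once $B$ is sparse (with nonzero entries only in its last two rows), the linear system for $C$ decouples into a triangular system in the free entries of $C$, solvable under the given hypotheses. I would carry out the explicit calculation for $m=2$ to exhibit the pattern, after which the same construction works for arbitrary $m\ge 2$ since the sparseness of the target matrices prevents new obstructions from arising.
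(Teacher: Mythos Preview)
Your proposal is correct and follows essentially the same two-stage strategy as the paper: first apply an upper unitriangular matrix of type~\eqref{eq1} (Lemma~\ref{lem2}) to reduce the lower half of $P$ to just the three nonzero entries $p_{2m,1}$, $p_{2m,2}$, $-\Delta/p_{2m,1}$, then apply a shear of type~\eqref{eq2} to clear the upper half to one of the two target forms. The paper's own proof is the same in outline but terser; your additional remarks about the symplectic form forcing the residual entry and about the constraint $C^\#=C$ being compatible with the required zeroing are correct elaborations of what the paper leaves implicit.
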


\begin{proof}
Multiplying~$P$ on the left by an appropriate upper unitriangular
matrix of type~\eqref{eq1}, we obtain a matrix~$P'$ whose lower half
contains only three non-zero elements:~$p_{2m,1}$, $p_{2m,2}$, and
$-\Delta/p_{2m,1}$ (as in the matrices~$u_1P$ and~$u_2P$ appearing
in the assertion of the lemma). Then, multiplying~$P'$ on the left
by an appropriate matrix of type~\eqref{eq2}, we obtain one of the
required matrices.
\end{proof}

The following theorem is used at the final stage of the argument in
all cases.

\begin{theorem}
\label{th7} Suppose $G$ is simply connected and $H\subset G$ is a
connected spherical subgroup. Suppose non-zero functions
$f_1,\dots,f_n\in A$ satisfy the following conditions:

{\rm a)} $f_i\in A_{\lambda_i,\chi_i}$ for $i=1,\dots,n$, where
$\lambda_i\in\mathfrak{X}_+(B)$, $\chi_i\in\mathfrak{X}(H)$, and the
weights $(\lambda_1,\chi_1),\dots,(\lambda_n,\chi_n)$ are linearly
independent;

{\rm b)} there is an inclusion $A\subset
\mathbb{C}[f_1,\dots,f_n,f_1^{-1},\dots,f_k^{-1}]$ for some
$k\le\nobreak n$.

Put $Z=\langle(\lambda_1,\chi_1),\dots,(\lambda_k,\chi_k)\rangle+
S\langle(\lambda_{k+1},\chi_{k+1}),\dots,(\lambda_n,\chi_n)\rangle$.
Then:

{\rm 1)} if for each expression
\begin{equation}
\label{eq3}
(\lambda_i,\chi_i)=(\mu_1,\sigma_1)+(\mu_2,\sigma_2),\qquad
\mu_1,\mu_2 \in \mathfrak{X}_+(B)\setminus\{0\},\quad
\sigma_1,\sigma_2 \in\mathfrak{X}(H),
\end{equation}
at least one of the weights $(\mu_1,\sigma_1)$, $(\mu_2,\sigma_2)$
is not contained in~$Z$, then the element~$f_i$ is irreducible
in~$A$;

{\rm 2)} if
$(\lambda_i,\chi_i)=(\lambda_j,\chi_j)+(\lambda_i-\lambda_j,\chi_i-\chi_j)$,
$i\ne j$, is the unique expression of the weight
$(\lambda_i,\chi_i)$ in the form~\eqref{eq3} such that both summands
belong to~$Z$, and if~$f_i$ is not divisible by~$f_j$ in~$A$, then
the element~$f_i$ is irreducible in~$A$;

{\rm 3)} if $f_i$ is irreducible in~$A$ for $i=1,\dots,n$, then
$A=\mathbb{C}[f_1,\dots,f_n]$ and $\widehat\Gamma(G/H)=
S\langle(\lambda_1,\chi_1),\dots,(\lambda_n,\chi_n)\rangle$.
\end{theorem}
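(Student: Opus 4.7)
The plan is to exploit three structural features of $A$ in combination: by Theorem~\ref{th1}, $A$ is factorial; since $G$ is simply connected and semisimple, $\mathbb{C}[G]^\times=\mathbb{C}^\times$, and hence $A^\times=\mathbb{C}^\times$; and by sphericity the weight decomposition $A=\bigoplus A_{\mu,\sigma}$ is a grading by the torsion-free group $\mathfrak{X}(T)\times\mathfrak{X}(H)$ with one-dimensional pieces precisely over $\widehat\Gamma(G/H)$. A useful auxiliary observation is that a nonzero element of $A_{0,\sigma}$ is left $B$-invariant, hence a $G$-invariant constant, forcing $\sigma=0$; so any non-unit element of $A$ has nonzero $B$-weight~$\mu$.

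The central technical step is to show that any multiplicative factorization of a weight-homogeneous element has weight-homogeneous factors. I will fix any total order on $\mathfrak{X}(T)\times\mathfrak{X}(H)$ compatible with addition and argue, using that $A$ is a domain, that the leading and trailing weights of a product are the sums of the leading and trailing weights of the factors; so if $f_i=gh$, then $g\in A_{\mu_1,\sigma_1}$ and $h\in A_{\mu_2,\sigma_2}$ with $(\mu_1,\sigma_1)+(\mu_2,\sigma_2)=(\lambda_i,\chi_i)$, and $\mu_1,\mu_2\in\mathfrak{X}_+(B)\setminus\{0\}$ whenever $g,h$ are non-units. Hypothesis~(b), combined with the linear independence of $(\lambda_1,\chi_1),\ldots,(\lambda_n,\chi_n)$, then forces $g$, being a weight-homogeneous Laurent polynomial in the $f_j$, to equal a single Laurent monomial $c\prod f_j^{a_j}$ with $a_j\in\mathbb{Z}$ and $a_j\ge 0$ for $j>k$; in particular $(\mu_1,\sigma_1)\in Z$, and similarly $(\mu_2,\sigma_2)\in Z$.

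Parts~1 and~2 are then immediate. In part~1 the hypothesis rules out any decomposition of $(\lambda_i,\chi_i)$ into two elements of $Z$ with both $\mu$-components dominant and nonzero, which contradicts what any non-trivial factorization would yield. In part~2 the unique such $Z$-decomposition forces, after relabelling, $g\in A_{\lambda_j,\chi_j}$; as this space is one-dimensional, $g$ is a scalar multiple of $f_j$, so $f_j\mid f_i$, contrary to the assumption.

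For part~3, take $h\in A$. By~(b) it is a Laurent polynomial in the $f_j$ with inverses allowed only for $j\le k$; decomposing by weight and using linear independence once more makes each weight-homogeneous component a single Laurent monomial $c\prod f_j^{a_j}$. It suffices to exclude negative exponents: if some $a_j<0$ with $j\le k$, then rearranging produces an identity in $A$ in which the prime $f_j$ (prime because irreducible in a factorial ring) divides a product $\prod_{j'\ne j}f_{j'}^{b_{j'}}$ with $b_{j'}\ge 0$. By primality, $f_j$ must be associate to some $f_{j'}$, hence equal to it up to $\mathbb{C}^\times$, contradicting the distinctness of their weights. Therefore $A=\mathbb{C}[f_1,\ldots,f_n]$ and $\widehat\Gamma(G/H)=S\langle(\lambda_1,\chi_1),\ldots,(\lambda_n,\chi_n)\rangle$. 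The only step requiring genuine care is the term-order homogenisation of factors; everything else is direct bookkeeping with the factorial grading supplied by Theorem~\ref{th1}.
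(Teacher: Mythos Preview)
Your proof is correct and follows essentially the same approach as the paper's: invoke factoriality of $A$ from Theorem~\ref{th1}, observe that non-unit weight vectors have nonzero $B$-weight, use hypothesis~(b) together with linear independence to write any weight-homogeneous element of $A$ as a single Laurent monomial in the $f_j$ (hence with weight in $Z$), and then read off parts~1)--3). The only notable difference is that you spell out, via a compatible total order on $\mathfrak{X}(T)\times\mathfrak{X}(H)$, why a factorization $f_i=gh$ in the graded domain $A$ must have weight-homogeneous factors---a standard step the paper simply asserts.
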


\begin{proof}
Let us prove~1),~2). Assume that~$f_i$ is reducible in~$A$. Then
$f_i=F_1F_2$ where for $j=1,2$ the element $F_j\in A$ is not
invertible and $F_j\in A_{\xi_j,\eta_j}$ for some
$\xi_j\in\mathfrak{X}_+(B)$, $\eta_j\in\mathfrak{X}(H)$. We have
$\xi_j\ne0$, $j=1,2$, because otherwise~$F_j$ would be a constant.
It follows from~b) that each of the functions~$F_1$,~$F_2$ is
expressible as an irreducible fraction of the form
$$
F_j=\frac {h_j(f_1,\dots,f_n)}{f_1^{\beta_{j1}}\cdots
f_k^{\beta_{jk}}}\,,
$$
where $\beta_{j1},\dots,\beta_{jk}\ge0$ and $h_j$ are polynomials
in~$n$ variables. Using~a), we obtain $h_j(f_1,\dots,f_n)= c_j
f_1^{\alpha_{j1}}\cdots f_n^{\alpha_{jn}}$, where $c_j\ne0$ and
$\alpha_{j1},\dots,\alpha_{jn}\ge0$. Then $(\mu_j,\sigma_j) \in Z$
for $j=1,2$. In the hypothesis of~1), we have already come to a
contradiction. In the hypothesis of\,~2), we see that one of the
functions~$F_1$,~$F_2$ has weight $(\lambda_j,\chi_j)$ and,
therefore, is proportional to~$f_j$ since $\dim
A_{\lambda_j,\chi_j}=1$. Thus, $f_i$ is divisible by $f_j$, a
contradiction.

We now prove~3). Suppose $f\in A_{\lambda,\chi}$ for some
$\lambda\in\mathfrak{X}_+(B)$, $\chi\in\mathfrak{X}(H)$. Arguing as
in the proof of~1), we see that~$f$ is expressible as an irreducible
fraction of the form $\frac{f_1^{\alpha_1}\cdots
f_n^{\alpha_n}}{f_1^{\beta_1}\cdots f_k^{\beta_k}}$\,, where
$\beta_1,\dots,\beta_k\ge0$ and $\alpha_1,\dots,\alpha_n\ge0$.
Since~$A$ is factorial (Theorem~\ref{th1}) and all the
elements~$f_i$ are irreducible, it follows that the numerator of
this fraction is divisible by its denominator, whence
$\beta_1=\dots=\beta_k=0$. Therefore, $f=f_1^{\alpha_1}\cdots
f_n^{\alpha_n}$ and we obtain the required result.
\end{proof}

\section{Computation of the extended weight semigroups}
\label{sec3}

In this section, the cases are numbered in accordance with the
numbers in~Table~\ref{tab1}. Except in Case~2 we use the following
convention. For each factor $G_i\subset G$ (all of them are of type
$\oSL$ or $\oSp$), the subgroups~$B_i$, $U_i$, and~$T_i$ consist of
all upper triangular, upper unitriangular, and diagonal matrices,
respectively, contained in~$G_i$.

\subsection{}
\label{ssec3.1} At first, we compute the semigroups
$\widehat\Gamma(G/H)$ for spaces~1,~2 in Table~\ref{tab1}.

\textsl{Case}~1. $G=\oSL_n\times \oSL_{n+1}$, $H=\oSL_n\times
\mathbb{C}^{\times}$, $H$ being embedded in~$G$ in such a way that
the image in~$G$ of a pair $(P,t)\in H$ is the pair $(P,P')\subset
G$, where $P'=\varphi(P,t)=\bigl(\begin{smallmatrix} Pt & 0 \\ 0 &
t^{-n}
\end{smallmatrix}\bigr)$. Further, $H_0=\oSL_n\times\{e\}\subset H$.
The basis character $\chi_0\in\mathfrak{X}(H)$ takes each element
$(P,t)\in H$ to~$t$.

Let us present $2n$ functions in~$A$ that are weight functions with
respect to $B\times H$. Given $(P,Q)\in G$, we put $R=Q\widetilde
P^{-1}$, where $\widetilde P=\varphi(P,e)$. We denote by~$\Delta_i$,
$i=1,\dots,n$, the minor of~$R$ corresponding to the last~$i$ rows
and first~$i$ columns, and by~$\delta_i$, $i=1,\dots,n$, the minor
of~$R$ corresponding to the last $i$ rows and columns
$n+1,1,2,\dots,i-1$. The~$2n$ functions $\Delta_1,\dots,\Delta_n$,
$\delta_1,\dots,\delta_n$ all belong to~$A$ and are weight functions
with respect to $B\times H$. The weights of
$\Delta_1,\Delta_2,\dots,\Delta_{n-1},\Delta_n$ are
$$
(\pi_1+\varphi_n,\chi_0),\ (\pi_2+\varphi_{n-1},2\chi_0),\ \dots,\
(\pi_{n-1}+\varphi_2,(n-1)\chi_0),\ (\varphi_1,n\chi_0),
$$
respectively, the weights of
$\delta_1,\delta_2,\delta_3,\dots,\delta_n$ are
$$
(\varphi_n,-n\chi_0),\ (\pi_1+\varphi_{n-1},-(n-1)\chi_0),\
(\pi_2+\varphi_{n-2},-(n-2)\chi_0),\ \dots,\
(\pi_{n-1}+\varphi_1,-\chi_0),
$$
respectively. Moreover, the~$2n$ weights of the functions
$\Delta_1,\dots,\Delta_n$, $\delta_1,\dots,\delta_n$ are linearly
independent, whence $\rk\widehat\Gamma(G/H)\ge2n$. Since $\rk
G=2n-1$ and $\rk H/H_0=1$, it follows that $\rk\widehat\Gamma(G/H)
\le 2n$. Hence $\rk\widehat\Gamma(G/H)=2n$.

Applying Theorem~\ref{th5} for $L=\oSL_n$, $K=\oSL_{n+1}$ and using
Theorem~\ref{th3}, we determine the spectrum of the representation
of~$G$ on the space $\mathbb{C}[G]^{H_0}$. (Thereby we determine the
semigroup $\Gamma(G/H_0)$.) In particular, we obtain the following
two facts. First, this spectrum contains the irreducible $G$-modules
with highest weights
\begin{equation}
\label{eq4} \pi_1+\varphi_n,\ \pi_2+\varphi_{n-1},\ \dots,\
\pi_{n-1}+\varphi_2,\ \varphi_1, \qquad \varphi_n,\
\pi_1+\varphi_{n-1},\ \pi_2+\varphi_{n-2},\ \dots,\
\pi_{n-1}+\varphi_1
\end{equation}
($2n$ weights in total), each of multiplicity~$1$. Second, any
non-zero weight in $\Gamma(G/H_0)$ contains at least one summand of
the form~$\varphi_i$. It follows that each of the
weights~\eqref{eq4} is indecomposable in $\Gamma(G/H_0)$. We note
that the set of~$2n$ weights~\eqref{eq4} is the image under the
map~$\pi$ (see Remark~\ref{rm2}) of the set of~$2n$ weights with
respect to $B\times H$ that correspond to the functions
$\Delta_1,\dots,\Delta_n$, $\delta_1,\dots,\delta_n$. Hence, by
Remark~\ref{rm2}, the latter~$2n$ weights are indecomposable in
$\widehat\Gamma(G/H)$. Since $\rk\widehat\Gamma(G/H)=2n$, it follows
that the semigroup $\widehat\Gamma(G/H)$, which is free, is
generated by the weights with respect to $B\times H$ of the
functions $\Delta_1,\dots,\Delta_n$, $\delta_1,\dots,\delta_n$.
Therefore
$A=\mathbb{C}[\Delta_1,\dots,\Delta_n,\delta_1,\dots,\delta_n]$.

\textsl{Case}~2. $G=\Spin_n \times \Spin_{n+1}$, $H=H_0=\Spin_n$.
Since $\mathfrak{X}(H)=0$, we get $\widehat\Gamma(G/H)=\Gamma(G/H)$
and a description of the semigroup~$\Gamma(G/H)$ follows from
Theorems~\ref{th5},~\ref{th4}. Namely, a direct check shows that,
for each $n\ge3$, the weights in the column `$\widehat\Gamma(G/H)$'
of Table~\ref{tab1} (there are exactly~$n$ of them) lie in the
semigroup~$\widehat\Gamma(G/H)$ and linearly independent, therefore
$\rk \Gamma(G/H)\ge n$. Since $\rk\Gamma(G/H)\le\rk G=n$, we get
$\rk\Gamma(G/H)=n$. Further, it is easy to see that every non-zero
element of $\Gamma(G/H)$ contains one of the elements~$\varphi_i$ as
a summand, whence all these weights are indecomposable except for
the weight $\pi_{k-1}+\varphi_k+\varphi_{k+1}$ for $n=2k+1$. The
last weight is indecomposable in~$\Gamma(G/H)$ because none of the
weights~$\varphi_k$, $\varphi_{k+1}$ is contained in~$\Gamma(G/H)$.
Thus, for each $n\ge3$ we have found~$n$ indecomposable linearly
independent weights in the semigroup $\Gamma(G/H)$. Since
$\rk\Gamma(G/H)=n$, these~$n$ weights freely generate the semigroup.

We now present weight functions generating the algebra $A(G'/H')$
where $G'/H'$ is a homogeneous space locally isomorphic to~$G/H$.
Namely, we consider the group $G'=\SO_n\times\SO_{n+1}$ and its
subgroup $H'=\SO_n$ embedded in~$G'$ diagonally. The covering
homomorphism of groups $\psi\colon G\to G'$ induces the morphism
$\psi_H\colon G/H\to G'/H'$, which is a two-sheeted covering.
Therefore there is an embedding of algebras
$$
\psi_H^*\colon \mathbb{C}[G'/H']\hookrightarrow \mathbb{C}[G/H],
$$
at that, $\psi_H^*(A(G'/H'))\subset A(G/H)$.

For each~$m$ we choose a basis~$\{e_i\}$ in the space~$V_m$ of the
tautological representation of~$\SO_m$ such that the matrix of the
invariant non-degenerate symmetric bilinear form is~$F_m$. Then all
upper-triangular and diagonal matrices in~$\SO_m$ form a Borel
subgroup~$\widetilde B_m$ and a maximal torus~$\widetilde T_m$,
respectively. We shall consider weights of irreducible
representations of $\SO_m$ with respect to~$\widetilde B_m$
and~$\widetilde T_m$. We fix the embedding $\tau_m\colon \SO_m
\hookrightarrow \SO_{m+1}$ such that its image is the stabilizer of
the vector $e_{\frac{m}2+1}$ for even~$m$ and the vector
$e_{\frac{m+3}2}-e_{\frac{m+1}2}$ for odd~$m$.

We fix the embedding~$H'$ in~$G'$ sending every matrix $P\in H'$ to
$(P,\tau_n(P))$. Suppose $(P,Q)\in G'$ and put $R=Q\tau_n(P)^{-1}$.
Now we present the generators of $A(G'/H')$ for each~$n$.

If $n=2k$, then we consider the following functions on~$R$:
$\Delta_i$, $i=1,\dots,k$, is the minor corresponding to the
last~$i$ rows and first~$i$ columns;~$\delta_i$,~$i=1,\dots,k$, is
the minor corresponding to the last~$i$ rows and columns
$k+\nobreak1,1,\dots,i-1$; $\Phi$ is the minor corresponding to the
last~$k$ rows and columns $k+2,1,\dots,k-1$. We have
$\delta_k^2=-2\Delta_k\Phi$. All these functions are weight
functions with respect to $(\widetilde B_n\times \widetilde
B_{n+1})\times H'$ and generate $A(G'/H')$. The weight of~$\Delta_i$
is $\pi_i+\varphi_i$ for $i\le k-2$, $\pi_{k-1}+\pi_k+\varphi_{k-1}$
for $i=k-1$, and $2(\pi_k+\varphi_k)$ for $i=k$. The weight
of~$\delta_i$ is $\pi_{i-1}+\varphi_i$ for $i\le k-1$ (we put
$\pi_0=0$) and $\pi_{k-1}+\pi_k+2\varphi_k$ for $i=k$. The weight
of~$\Phi$ is $2(\pi_{k-1}+\varphi_k)$. The algebra $A(G/H)$ also
contains functions~$\Delta$ and~$D$ such that $\Delta^2
=\psi_H^*(\Delta_k)$, $D^2=\psi_H^*(\Phi)$, and $\sqrt{-2}\,\Delta
D=\psi_H^*(\delta_k)$. Their weights are $\pi_k+\varphi_k$ and
$\pi_{k-1}+\varphi_k$, respectively. The functions
$\psi_H^*(\Delta_i)$ and $\psi_H^*(\delta_j)$, $i,j\le k-1$, along
with the functions~$\Delta$ and~$D$ correspond to the indecomposable
elements of $\widehat\Gamma(G/H)$ and generate~$A(G/H)$.

If $n=2k+1$, then we consider the following functions on~$R$:
$\Delta_i$, $i=1,\dots,k$, is the minor corresponding to the
last~$i$ rows and first~$i$ columns; $\delta_i$, $i=1,\dots,k+1$, is
the difference of two minors, the first corresponding to the
last~$i$ rows and columns $k+1,1,\dots,i-1$, and the second
corresponding to the last~$i$ rows and columns $k+2,1,\dots,i-1$;
$\Phi$ is the minor corresponding to rows $1,\dots,k,k+2$ (counting
from the bottom) and columns $1,\dots,k,2\bigl[\frac{k}2\bigr]+2$.
We have $\Delta_k^2=(-1)^{k+1} \delta_{k+1}\Phi$. All these
functions are weight functions with respect to $(\widetilde
B_n\times \widetilde B_{n+1})\times H'$ and generate $A(G'/H')$. The
weight of~$\Delta_i$ is $\pi_i+\varphi_i$ for $i\le k-1$ and
$2\pi_k+\varphi_k+\varphi_{k+1}$~for $i=k$. The weight of~$\delta_i$
is $\pi_{i-1}+\varphi_i$ for $i\le k-1$,
$\pi_{k-1}+\varphi_k+\varphi_{k+1}$ for~$i=k$, and
$2(\pi_k+\varphi_{k+1})$ for $i=k+1$ (we put $\pi_0=0$). The weight
of~$\Phi$ is $2(\pi_k+\varphi_k)$. The algebra $A(G/H)$ also
contains functions~$\delta$ and~$D$ such that
$\delta^2=\psi_H^*(\delta_{k+1})$, $D^2=\psi_H^*(\Phi)$, and
$\sqrt{(-1)^{k+1}}\,\delta D=\Delta_k$. Their weights are
$\pi_k+\varphi_{k+1}$ and $\pi_k+\varphi_k$, respectively. The
functions $\psi_H^*(\Delta_i)$, $i\le k-1$, $\psi_H^*(\delta_j)$,
$j\le k$, $\delta$, and~$D$ correspond to the indecomposable
elements of $\widehat\Gamma(G/H)$ and generate $A(G/H)$.

\subsection{}
\label{ssec3.2} We now proceed to computing the extended weight
semigroups of spaces~3--8 in Table~\ref{tab1}. First we describe the
general method.

In each of the cases considered below we search for the algebra~$A$.
The functions in this algebra satisfy $f(g)=f(u^{-1}gh)$ for all
$g\in G$, $u\in U$, $h\in H_0$. To find such functions, we multiply
an arbitrary element~$g$ of some dense open subset $M\subset G$ by
appropriate elements in~$U$ and~$H_0$ so as to obtain an element of
`canonical' form. A canonical form for elements of~$M$ is specified
by the condition that some of the matrix entries equal zero, some
others equal one, and some of the remaining entries equal minus one.
The set~$Y$ of elements of the \textit{whole} group~$G$ (not only
in~$M$) satisfying these restrictions is closed in~$G$ and serves as
a section in the sense of Lemma~\ref{lem1}. By that lemma, $A$ is
contained in the algebra~$\mathbb{C}[Y]$.

In all the cases we first present a set of functions
$f_1,\dots,f_p\in A$ that are weight functions with respect to
$B\times H$ and then we use the canonical form to prove that these
functions generate~$A$. (In almost all cases the functions
$f_1,\dots,f_p$ naturally arise when reducing to canonical form.)
The set~$M$ is determined by the condition that some of the
functions~$f_1,\dots,f_p$ do not vanish.

If one of the factors of~$G$ is $\oSp_{2m}$ and one of the factors
of~$H_0$ is $\oSp_{2m-2k}$ embedded only in~$\oSp_{2m}$ (as the
central $(2m-2k)\times(2m-2k)$ block), then by Theorem~\ref{th6}
every function in~$A$ is independent of the matrix entries of the
factor $\oSp_{2m}$ located in columns
$$
k+1,\ k+2,\ \dots,\ 2m-k-1,\ 2m-k.
$$
Therefore, when reducing to canonical form we may care not about
transformations of these entries under the actions of~$U$ and~$H_0$.
In this connection, firstly, we do not consider anymore the action
of the factor $\oSp_{2m-2k}\subset H_0$ since it transforms in a
non-trivial way only columns of~$\oSp_{2m}$ mentioned above.
Secondly, it is sufficient for our purpose to reduce to canonical
form not the whole of a matrix in~$\oSp_{2m}$ but only its first~$k$
and last~$k$ columns. In other words, it suffices to impose
restrictions of the form $g_\alpha=c$, where $c\in\{0,1,-1\}$,
defining a canonical form of a matrix in $\oSp_{2m}$ only on the
matrix entries~$g_\alpha$ of the factor $\oSp_{2m}$ that are located
in the~$2k$ columns indicated above. Therefore, when formally
considering matrices $Q\in\oSp_{2m}$, we actually deal only with
their submatrices~$\overline Q$ consisting of the first~$k$ and the
last~$k$ columns of~$Q$, and it is~$\overline Q$ that is reduced to
canonical form. This can be interpreted as follows: the factor
$\oSp_{2m}$ of~$G$ is replaced by the quotient space
$\oSp_{2m}/\oSp_{2m-2k}$ on which the actions of~$U$ and the
remaining factors of~$H_0$ are preserved. As we see from the proof
of Theorem~\ref{th6}, this quotient space can be thought of as the
set of $2m\times 2k$ matrices whose columns satisfy the same
relations as the first~$k$ and the last~$k$ columns of a matrix
in~$\oSp_{2m}$.

It always turns out that the matrix entries of the canonical form of
an element $g\in M$ on which the functions in~$A$ can depend are
rational functions (more precisely, Laurent polynomials) in the
values of $f_1,\dots,f_p$ at the point~$g$. We denote these rational
functions, which are obviously invariant with respect to~$U$
and~$H_0$, by $r_1(f_1,\dots,f_p)$, $\dots$, $r_q(f_1,\dots,f_p)$.
Since regular functions on the section are generated by the
restrictions of the coordinate functions on~$G$, it follows that
every function $f\in A$ is a polynomial in the functions
$r_1,\dots,r_q$, that is,
$$
f(g)=F\bigl(r_1(f_1(g),\dots,f_p(g)),\dots,
r_q(f_1(g),\dots,f_p(g))\bigr)
$$
for every $g\in M$, where $F(x_1,\dots,x_q)$ is a polynomial. Thus,
there is an inclusion $A \subset \tilde A=
\mathbb{C}[r_1(f_1,\dots,f_p),\dots,r_q(f_1,\dots,f_p)]$.

Since the algebra~$\tilde A$ consists of rational functions that are
invariant under~$U$ and~$H_0$, it follows that $A=\tilde A \cap
\mathbb{C}[G]$. Therefore our next step aims at extracting regular
functions from~$\tilde A$. This is carried out using
Theorem~\ref{th7}. Namely, it follows from assertions~1) and~2) (the
latter one is used only in Case~3 for $n=3$ and Case~8) of
Theorem~\ref{th7} that each of the functions $f_1,\dots,f_p$ is
irreducible in~$A$. Then assertion~3) yields that
$A=\mathbb{C}[f_1,\dots,f_p]$. A description of the semigroup
$\widehat\Gamma(G/H)$ also follows from assertion~3) of
Theorem~\ref{th7}.

In all the cases the conditions~a) and~b) of Theorem~\ref{th7} are
verified directly, therefore we do not even mention that except in
Cases~3,~4. We check the hypothesis of assertion~1) of this theorem
only in Cases~3,~4 since it is verified similarly in all the
remaining cases.

We now proceed to consideration of all the cases. Our argument
follows the plan discussed above, which is therefore used without
extra explanation.

\textsl{Cases}~3,~4. It is convenient to consider the pairs in
rows~3,~4 of Table~\ref{tab1} together. In both cases,
$G=\oSL_n\times \oSp_{2m}$, in Case~3 we have
$H=\mathbb{C}^{\times}\cdot \oSL_{n-2}\times \oSL_2\times
\oSp_{2m-2}$, in Case~4 we have $H=\oSL_{n-2}\times \oSL_2\times
\oSp_{2m-2}$. Case~4 is considered only for $n\ge5$, otherwise the
space $G/H$ is not spherical. The embedding of~$H$ in~$G$ is as
follows. The factor $\oSL_{n-2}$ is embedded in the factor $\oSL_n$
of~$G$ as the upper left $(n-2)\times (n-2)$ block. The factor
$\oSL_2$ is diagonally embedded in~$G$ as the lower right $2\times
2$ block in~$\oSL_n$ and as the $2\times 2$ block corresponding to
the first and last rows and columns of the factor $\oSp_{2m}$. The
factor $\oSp_{2m-2}$ is embedded in~$\oSp_{2m}$ as the central
$(2m-2)\times (2m-2)$ block. In Case~3 the torus
$\mathbb{C}^\times\subset H$ is embedded in~$\oSL_n$ as
$E_{n-2}t^{-2}\oplus E_2t^{n-2}$ for odd~$n$ and as
$E_{n-2}t^{-1}\oplus E_2t^{\frac{n-2}2}$ for even~$n$. The
group~$H_0$ is the same in both cases and equals $\oSL_{n-2}\times
\oSL_2\times \oSp_{2m-2}$. In~Case~3 the basis character
$\chi_0\in\mathfrak{X}(H)$ acts on the torus
$\mathbb{C}^\times\subset H$ as $t\mapsto t^{n-2}$ for odd~$n$ and
as $t\mapsto t^{\frac{n-2}2}$ for even~$n$.

We search for functions $f\in \mathbb{C}[G]$ satisfying
$$
f(P,Q)=f(u_1^{-1}Ph_1h_2,u_2^{-1}Qh_2h_3)
$$
for all $P\in G_1$, $Q\in G_2$, $u_1\in U_1$, $u_2\in U_2$, $h_i\in
H_i$, $i=1,2,3$. Suppose $P=(p_{ij})$ and $Q=(q_{ij})$. We denote
by~$P_{ij}$ the $(i,j)$-cofactor of $P$ so that $P^{-1}=(P_{ji})$.

Theorem~\ref{th6} allows us not to consider the action of~$H_3$ and
to reduce to canonical form the first and last columns of~$Q$.

Suppose $\Delta=p_{n-1,n-1}p_{n,n}-p_{n-1,n}p_{n,n-1}$,
$W=q_{2m-1,1}q_{2m,2m}-q_{2m-1,2m}q_{2m,1}$,
$D=p_{n,n-1}q_{2m,2m}-p_{n,n}q_{2m,1}$,
$\Phi_1=p_{n,n-1}P_{1,n-1}+p_{n,n}P_{1,n}$,
$\Phi_2=q_{2m,1}P_{1,n-1}+q_{2m,2m}P_{1,n}$, $\delta$ is the minor
of~$P$ corresponding to the last $n-2$ rows and the first $n-2$
columns. We have~$\Delta$, $W$, $D$, $\Phi_1$, $\Phi_2$, $\delta\in
A$. At that, $W\equiv1$ for $m=1$ and $\Phi_1\equiv-\delta\Delta$
for $n=3$. All these functions are weight functions with respect to
$B\times H$. Their weights are listed in Table~\ref{tab2}.

\begin{table}[h]
\renewcommand{\arraystretch}{1.2}
\renewcommand{\tabcolsep}{2pt}
\begin{center}
\caption{} \vskip3mm \label{tab2} \footnotesize{
\begin{tabular}{|c|c|c|c|c|c|c|}
\hline No. & $\Delta$ & $W$ & $D$ & $\Phi_1$ & $\Phi_2$ & $\delta$
\\
\hline 3 & $(\pi_{n-2},2\chi_0)$ & $(\varphi_2,0)$ ($m\ge2$) &
$(\pi_{n-1}{+}\varphi_1,\chi_0)$ & $(\pi_1{+}\pi_{n-1},0)$ &
$(\pi_1{+}\varphi_1,-\chi_0)$ & $(\pi_2,-2\chi_0)$
\\
\hline 4 & $\pi_{n-2}$ & $\varphi_2$ ($m\ge2$) &
$\pi_{n-1}{+}\varphi_1$ & $\pi_1{+}\pi_{n-1}$ & $\pi_1{+}\varphi_1$
& $\pi_2$
\\
\hline
\end{tabular}
}
\end{center}
\end{table}

Below we shall apply Theorem~\ref{th7} to the
functions~$\Delta$,~$W$ (the latter is present for $m\ge2$), $D$,
$\Phi_1$, $\Phi_2$, $\delta$ for $n\ge4$ (in~Cases~3,~4) and the
functions~$\Delta$,~$W$ (the latter is present for $m\ge2$), $D$,
$\Phi_2$, $\delta$ for $n=3$ (in~Case~3). We note that in each case
the set of weights with respect to $B\times H$ corresponding to
these functions is linearly independent.

Let $M$ be the open subset of~$G$ given by $\Delta\ne0$, $W \ne 0$,
$D\ne0$, $\Phi_1\ne0$, $\delta\ne0$ and suppose $(P,Q)\in M$. Acting
by~$H_2$ we transform $(P,Q)$ to a pair $(P',Q')$ such that the
lower right $2\times2$ block of~$P'$ and the lower $2\times2$ block
of~$\overline Q{}'$ are
$$
\begin{pmatrix}
1 & *
\\
0 & \Delta
\end{pmatrix},\qquad
\begin{pmatrix}
* & \dfrac{W\Delta}{D}
\\
-\dfrac{D}{\Delta} & 0
\end{pmatrix}
$$
respectively. Then acting by~$U_2$ (Lemma~\ref{lem4}) we
transform~$Q'$ to a matrix~$Q''$ where
$$
\overline Q{}''=\begin{pmatrix} 0 & \dfrac{\Delta}{D}
\\
\hdots & \hdots
\\
0 & \dfrac{W\Delta}{D}
\\
-\dfrac{D}{\Delta} & 0
\end{pmatrix},\quad
m\ge2,\qquad \overline Q{}''=
\begin{pmatrix}
0 & \dfrac{\Delta}{D}
\\
-\dfrac{D}{\Delta} & 0
\end{pmatrix},\quad
m=1.
$$
(The dots stand for zero entries.) We now turn to the matrix~$P'$.
Firstly, acting by~$U_1$ we make all the entries in the last two
columns equal to zero except for the two on the diagonal of the
lower $2\times2$ block. (These entries are~$1$ and~$\Delta$.) Acting
by~$H_1$ on the obtained matrix, we transform the block
corresponding to the last $n-2$ rows and first $n-2$ columns to the
form $\diag(1,\dots,1,\delta)$. After that, again acting by~$U_1$,
we transform the new matrix to the form (for $n\ge5$, $n=4$, $n=3$,
respectively)
\begin{equation}
\label{eq5}
\begin{pmatrix}
0 & \hdots & 0 & \pm\dfrac{\delta}{\Phi_1} & 0 & 0
\\[3mm]
0 & \hdots & \pm\dfrac{\Phi_1}{\Delta\delta} & \pm\dfrac{\Phi_2}{D}
& 0 & 0
\\[2mm]
1 & \hdots & 0 & 0 & 0 & 0
\\
\hdots & \hdots & \hdots & \hdots & \hdots & \hdots
\\
0 & \hdots & 1 & 0 & 1 & 0
\\
0 & \hdots & 0 & \delta & 0 & \Delta
\end{pmatrix},\quad
\begin{pmatrix}
0 & -\dfrac{\delta}{\Phi_1} & 0 & 0
\\[3mm]
\dfrac{\Phi_1}{\Delta\delta} & \dfrac{\Phi_2}{D} & 0 & 0
\\[3mm]
1 & 0 & 1 & 0
\\
0 & \delta & 0 & \Delta
\end{pmatrix},\quad
\begin{pmatrix}
\dfrac{1}{\Delta}& 0 & 0
\\[3mm]
\dfrac{\Phi_2}D & 1 & 0
\\[2mm]
\delta & 0 & \Delta
\end{pmatrix}.
\end{equation}
(The lower left $(2n-2)\times(2n-2)$ block of the first matrix is
equal to $\diag(1,\dots\allowbreak\dots,1,\delta)$ and the remaining
dots stand for zero entries.) We denote the resulting
matrix~\eqref{eq5} by~$P''$ in each of the cases $n\ge5$, $n=4$,
$n=3$.

The pair $(P'',Q'')$ is the canonical form of the pair $(P,Q)$. Thus
the section is obtained. Therefore the desired algebra~$A$ is
contained in the algebra
\begin{equation}
\label{eq6} \tilde
A=\mathbb{C}\biggl[\frac{\Delta}{D}\,,\frac{D}{\Delta}\,,
\frac{W\Delta}{D}\,,\Delta,\delta,\frac{\Phi_1}{\Delta\delta}\,,
\frac{\Phi_2}{D}\,,\frac{\delta}{\Phi_1}\biggr] \subset
\mathbb{C}\biggl[\Delta,W,D,\Phi_1,\Phi_2,\delta,
\frac{1}{\Delta}\,,\frac{1}{D}\,,\frac{1}{\Phi_1}\,,\frac{1}{\delta}\biggr]\
\
\end{equation}
for $n\ge4$ and in the algebra
\begin{equation}
\label{eq7} \tilde
A=\mathbb{C}\biggl[\frac{\Delta}{D}\,,\frac{D}{\Delta}\,,
\frac{W\Delta}{D}\,,\Delta,\delta,\frac{\Phi_2}{D}\,,\frac{1}{\Delta}\biggr]
\subset \mathbb{C}\biggl[\Delta,W,D,\Phi_2,\delta,
\frac{1}{\Delta}\,,\frac{1}{D}\biggr]
\end{equation}
for $n=3$.

We now apply Theorem~\ref{th7} to the set of functions~$\Delta$,
$D$, $\Phi_1$, $\delta$, $\Phi_2$,~$W$ (the last is present for
$m\ge2$) for $n\ge4$ and~$\Delta$, $D$, $\delta$, $\Phi_2$,~$W$ (the
last is present for $m\ge2$) for $n=3$. We have already seen that
condition~a) holds. Condition~b) follows from the
inclusions~\eqref{eq6} and~\eqref{eq7}.

Further we use assertion~1) of Theorem~\ref{th7}. First, we note
that the weights of~$\Delta$, $W$ (for $m\ge2$), $\delta$ admit no
representation of the form~\eqref{eq3}. Hence, these three functions
are irreducible in~$A$. Every representation of the weight of~$D$ in
the form~\eqref{eq3} has the form
$(\pi_{n-1},a\chi_0)+(\varphi_1,b\chi_0)$ where $a,b\in\mathbb{Z}$
and $a+b=1$. Every representation of the weight of~$\Phi_1$ in the
form~\eqref{eq3} has the form $(\pi_1,a\chi_0)+(\pi_{n-1},b\chi_0)$
where $a,b\in\mathbb{Z}$ and $a+b=0$. Every representation of the
weight of~$\Phi_2$ in the form~\eqref{eq3} has the form
$(\pi_1,a\chi_0)+(\varphi_1,b\chi_0)$ where $a,b\in\mathbb{Z}$ and
$a+b=-1$. We now distinguish two possibilities: $n\ge4$ and $n=3$.

At first, suppose $n\ge4$. Then it is easy to see that none of the
weights of the form $(\pi_1,p\chi_0)$, $(\pi_{n-1},q\chi_0)$,
$(\varphi_1,r\chi_0)$, where $p,q,r\in\mathbb{Z}$, lies in the set
\begin{align*}
Z&=\bigl\langle(\pi_{n-2},2\chi_0), (\pi_{n-1}+\varphi_1,\chi_0),
(\pi_1+\pi_{n-1},0),(\pi_2,-2\chi_0)\bigr\rangle
\\
&\qquad+S\bigl\langle(\pi_1+\varphi_1,-\chi_0),(\varphi_2,0)\bigr\rangle.
\end{align*}
Therefore the functions~$D$, $\Phi_1$, $\Phi_2$ are also irreducible
in~$A$. Thus we have checked the hypothesis of assertion~3) of
Theorem~\ref{th7}, hence the functions~$\Delta$, $D$, $\Phi_1$,
$\delta$, $\Phi_2$,~$W$ ($m\ge2$) generate the algebra~$A$ and their
weights with respect to $B\times H$ generate the
semigroup~$\widehat\Gamma(G/H)$.

Now suppose $n=3$. We shall prove that the function~$D$ is
irreducible using assertion~1) of Theorem~\ref{th7}. Assume that
there are integers~$a$,~$b$ such that $a+b=1$ and both weights
$(\pi_2,a\chi_0)$, $(\varphi_1,b\chi_0)$ lie in the set
$$
Z=\bigl\langle(\pi_1,2\chi_0),(\pi_2+\varphi_1,\chi_0)\bigr\rangle+
S\bigl\langle(\pi_2,-2\chi_0),(\pi_1+\varphi_1,-\chi_0),(\varphi_2,0)\bigr\rangle.
$$
Then it is not hard to show that
\begin{align*}
(\pi_2,a\chi_0)&=p(\pi_1,2\chi_0)+p(\pi_2+\varphi_1,\chi_0)+
(1-p)(\pi_2,-2\chi_0)-p(\pi_1+\varphi_1,-\chi_0),
\\
(\varphi_1,b\chi_0)&=-q(\pi_1,2\chi_0)+(1-q)(\pi_2+\varphi_1,\chi_0)+
(q-1)(\pi_2,-2\chi_0)
\\
&\qquad+q(\pi_1+\varphi_1,-\chi_0)
\end{align*}
for some integers~$p$,~$q$. At that, $1-p\ge0$, $-p\ge0$, $q-1\ge0$,
$q\ge0$, whence $p\le0$ and $q\ge1$. Next, we have $a=6p-2$ and
$b=-6q+3$. Since $a+b=1$, it follows that $p=q$. This contradicts
the inequalities $p\le0$ and $q\ge1$, thus the function~$D$ is
irreducible in~$A$. Now let us show that the function~$\Phi_2$
satisfies the hypothesis of assertion~2) of Theorem~\ref{th7}.
First, arguing as for the weight of~$D$ we obtain that
$(\pi_1+\varphi_1,-\chi_0)=(\pi_1,2\chi_0)+(\varphi_1,-3\chi_0)$ is
the unique representation of the weight of~$\Phi_2$ in the
form~\eqref{eq3} such that both summands lie in~$Z$. At that,
$(\pi_1,2\chi_0)$ is the weight of~$\Delta$. Second, we consider the
matrices $P=-F_3\in\oSL_3$ and $Q=E_{2m}\in\oSp_{2m}$. We have
$\Delta(P,Q)=0$, $\Phi_2(P,Q)=-1\ne0$, whence~$\Phi_2$ is not
divisible by~$\Delta$. Therefore $\Phi_2$ is irreducible. Hence by
assertion~3) of Theorem~\ref{th7} the functions~$\Delta$, $D$,
$\delta$, $\Phi_2$, $W$ ($m\ge2$) generate the algebra~$A$ and their
weights with respect to $B\times H$ generate the
semigroup~$\widehat\Gamma(G/H)$.

\textsl{Case}~5. $G=\oSp_{2n}\times \oSp_{2m},
H=H_0=\oSp_{2n-2}\times \oSp_2\times \oSp_{2m-2},
\mathfrak{X}(H)=0.$ The factor $\oSp_{2n-2}$ of~$H$ is embedded in
the factor $\oSp_{2n}$ of~$G$ as the central $(2n-2)\times(2n-2)$
block. Similarly the factor $\oSp_{2m-2}$ is embedded in the
factor~$\oSp_{2m}$ as the central $(2m-2)\times(2m-2)$ block. The
factor $\oSp_2$ of~$H$ is diagonally embedded in~$G$ as the
$2\times2$ block in the first and last rows and columns in both
factors of~$G$.

We are interested in functions $f(P,Q)\in \mathbb{C}[G]$ such that
$$
f(P,Q)=f(u_1^{-1}Ph_1h_2,u_2^{-1}Qh_2h_3)
$$
for all matrices $P\in G_1$, $Q\in G_2$, $u_1\in U_1$, $u_2\in U_2$,
$h_i\in H_i$, $i=1,2,3$. Suppose $P=(p_{ij})$, $Q=(q_{ij})$.

Theorem~\ref{th6} allows us not to consider the actions of~$H_1$
and~$H_3$ and to reduce to canonical form only the first and last
columns of~$P$ and~$Q$.

We introduce the functions
$\Delta=p_{2n-1,1}p_{2n,2n}-p_{2n-1,2n}p_{2n,1}$,
$\delta=q_{2m-1,1}q_{2m,2m}-q_{2m-1,2m}q_{2m,1}$,
$D=p_{2n,1}q_{2m,2m}-p_{2n,2m}q_{2m,1}$. We have $\Delta\equiv 1$
for $n=1$ and $\delta\equiv1$ for $m=1$. It is clear that~$\Delta$,
$\delta$,~$D$ lie in~$A$ and are weight functions with respect to
$B\times H$. Their weights are equal to $\pi_2$ ($n\ge2$),
$\varphi_2$ ($m\ge2$), $\pi_1+\varphi_1$, respectively. Below we
shall apply Theorem~\ref{th7} to these functions.

We consider the open subset $M\subset G$ determined by the
conditions $\Delta\,{\ne}\,0$, $\delta \ne 0$,  $D\ne0$. Suppose
$(P,Q)\in M$.

Acting by~$H_2$ we transform $(P,Q)$ to a pair $(P',Q')$ such that
the lower $2\times 2$ blocks of the matrices~$\overline P{}'$
and~$\overline Q{}'$ are
$$
\begin{pmatrix}
1 & *
\\
0 & \Delta
\end{pmatrix},\qquad
\begin{pmatrix}
* & \dfrac{\delta\Delta}D
\\
-\dfrac D\Delta & 0
\end{pmatrix},
$$
respectively. Then, acting by~$U_1$ and~$U_2$ (Lemma~\ref{lem4}) we
transform~$P'$,~$Q'$ to~$P''$,~$Q''$, respectively, where
$$
\overline P{}''=
\begin{pmatrix}
0 & 0
\\
0 & -1
\\
\hdots & \hdots
\\
1 & 0
\\
0 & \Delta
\end{pmatrix},\qquad
\overline Q{}''=
\begin{pmatrix}
0 & \dfrac\Delta{D}
\\
\hdots & \hdots
\\
0 & \dfrac{\delta\Delta}D
\\
-\dfrac D\Delta & 0
\end{pmatrix}
$$
for $n,m\ge2$. In these matrices the dots stand for zero entries. If
$n=1$ or $m=1$ then
$$
P''=\overline P{}''=\begin{pmatrix} 1 & 0
\\
0 & 1
\end{pmatrix},\qquad
Q''=\overline Q{}''=\begin{pmatrix} 0 & \dfrac\Delta D
\\
-\dfrac D\Delta & 0
\end{pmatrix}
$$
respectively.

The pair $(P'',Q'')$ is the canonical form of the pair $(P,Q)$. Thus
the section is obtained, therefore the desired algebra~$A$ is
contained in the algebra
$$
\tilde
A=\mathbb{C}\biggl[\Delta,\frac{\Delta}{D}\,,\frac{D}{\Delta}\,,
\frac{\delta\Delta}{D}\biggr]\subset
\mathbb{C}\biggl[\Delta,\delta,D,\frac{1}{\Delta}\,,\frac{1}{D}\biggr].
$$
By assertions~1),~3) of Theorem~\ref{th7}, the functions $\Delta$
($n \ge2$), $\delta$ ($m \ge2$), $D$ are irreducible and generate
the algebra~$A$ and their weights with respect to $B\times H$
generate the semigroup~$\widehat\Gamma(G/H)$.

\textsl{Case}~6. $G=\oSp_{2n}\times \oSp_4$,
$H=H_0=\oSp_{2n-4}\times \oSp_4$, $\mathfrak{X}(H)=0$. The first
factor of~$H$ is embedded in the first factor of~$G$ as the central
$(2n-4)\times(2n-4)$ block; the second factor of~$H$ is diagonally
embedded in~$G$, as the $4\times 4$ block in rows and columns
nos.~$1$, $2$, $2n-1$, $2n$ in the first factor.

We are interested in functions $f(P,Q)\in \mathbb{C}[G]$ such that
$$
f(P,Q)=f(u_1^{-1}Ph_1h_2,u_2^{-1}Qh_2)
$$
for all matrices $P\in G_1$, $Q\in G_2$, $u_1\in U_1$, $u_2\in U_2$,
$h_1\in H_1$, $h_2\in H_2$.

Suppose $(P,Q)\in G$ is an arbitrary pair of matrices, the set~$M$
will be chosen later. Let us reduce this pair to canonical form.
First of all, we put $h_2=Q^{-1}u_2$ and thereby transform~$Q$ to
the identity matrix~$E_4$. Now the problem is reduced to finding a
canonical form for the matrix $PQ^{-1}\in\oSp_{2n}$ with respect to
the right action of~$U_1$ and left actions of~$U_2$,~$H_1$. For
short, we put $PQ^{-1}=R$. Suppose $R=(r_{ij})$. By
Theorem~\ref{th6} we do not consider anymore the action of~$H_1$ and
restrict ourselves to the problem of reducing only the first two and
the last two columns of~$R$ to canonical form.

We denote by~$\Delta_i$, $i=1,2,3,4$, the minor of~$\overline R$
corresponding to the last~$i$ rows and first~$i$ columns. Let $\Phi$
be the minor of order~3 of~$\overline R$ corresponding to the last
three rows and columns~$1$,~$2$,~$4$. Next, we put
$D=r_{2n,1}r_{2n-1,2n}-
r_{2n-1,1}r_{2n,2n}+r_{2n,2}r_{2n-1,2n-1}-r_{2n-1,2}r_{2n,2n-1}$,
$F=\Delta_1\Phi+r_{2n,2}\Delta_3$. Below we shall find out that
$\Delta_4=-D$ for $n=3$. The functions~$\Delta_1$, $\Delta_2$,
$\Delta_3$, $\Delta_4$,~$D$, and~$F$ lie in~$A$ and are weight
functions with respect to $B\times H$. Their weights are equal to
$\pi_1+\varphi_1$, $\pi_2+\varphi_2$, $\pi_3+\varphi_1$, $\pi_4$
($n\ge4$), $\pi_2$, and $\pi_1+\pi_3+\varphi_2$, respectively. Below
we shall apply Theorem~\ref{th7} to the functions~$\Delta_1$,
$\Delta_2$, $\Delta_3$, $\Delta_4$, $D$, $F$ for $n\ge4$ and
functions~$\Delta_1$, $\Delta_2$, $\Delta_3$, $D$, $F$ for $n=3$.

The following argument will first be performed for $n\ge 4$. We
shall reduce the matrix~$R$ to canonical form on the open subset
$M\subset G$ where $\Delta_i\ne0$ for $i=1,2,3,4$.

Using the left action of~$U_1$ by matrices of type~\eqref{eq1} we
transform~$R$ to a matrix~$R'$ such that all the non-zero elements
of the lower half of the matrix~$\overline R{}'$ are concentrated in
its lower $4\times 4$ block which has the form
$$
\begin{pmatrix}
0 & 0 & 0 & -\dfrac{\Delta_4}{\Delta_3}
\\
0 & 0 & \dfrac{\Delta_3}{\Delta_2} & r_6
\\
0 & -\dfrac{\Delta_2}{\Delta_1} & r_4 & r_5
\\
\Delta_1 & r_1 & r_2 & r_3
\end{pmatrix},
$$
where $r_1=r_{2n,2}$, $r_2=r_{2n,2n-1}$, $r_3=r_{2n,2n}$,
$r_4=\cfrac{r_{2n,1}r_{2n-1,2n-1}-r_{2n-1,1}r_{2n,2n-1}}{\Delta_1}$\,,
$r_5=\cfrac{r_{2n,1}r_{2n-1,2n}-r_{2n-1,1}r_{2n,2n}}{\Delta_1}$\,,
$r_6=\cfrac{\Phi}{\Delta_2}$\,.

Multiplying~$R'$ on the right by an appropriate matrix in~$U_2$ of
type~\eqref{eq1} and then by an appropriate matrix in~$U_2$ of
type~\eqref{eq2} we successively obtain two matrices~$R'_1$
and~$R'_2$ such that the lower $4\times 4$ blocks of~$\overline
R{}'_1$ and~$\overline R{}'_2$ are
$$
\begin{pmatrix}
0 & 0 & 0 & -\dfrac{\Delta_4}{\Delta_3}
\\[3mm]
0 & 0 & \dfrac{\Delta_3}{\Delta_2} &
r_6+\dfrac{r_1\Delta_3}{\Delta_1\Delta_2}
\\[3mm]
0 & -\dfrac{\Delta_2}{\Delta_1} & r_4 & r_5+\dfrac{r_1r_4}{\Delta_1}
\\[3mm]
\Delta_1 & 0 & r_2 & r_3+\dfrac{r_1r_2}{\Delta_1}
\end{pmatrix},\qquad
\begin{pmatrix}
0 & 0 & 0 & -\dfrac{\Delta_4}{\Delta_3}
\\[3mm]
0 & 0 & \dfrac{\Delta_3}{\Delta_2} & \dfrac{F}{\Delta_1\Delta_2}
\\[3mm]
0 & -\dfrac{\Delta_2}{\Delta_1} & 0 & \dfrac{D}{\Delta_1}
\\[3mm]
\Delta_1 & 0 & 0 & 0
\end{pmatrix},
$$
respectively. Further, acting on the left by matrices in~$U_1$ of
type~\eqref{eq2} we transform~$R'_2$ to a matrix~$R''$ where
$$
\overline R{}''=\begin{pmatrix} 0 & 0 & 0 & -\dfrac{1}{\Delta_1}
\\
0 & 0 & \dfrac{\Delta_1}{\Delta_2} & 0
\\
0 & 0 & 0 & \dfrac{D}{\Delta_3}
\\
\hdots & \hdots & \hdots & \hdots
\\
0 & 0 & 0 & -\dfrac{\Delta_4}{\Delta_3}
\\[3mm]
0 & 0 & \dfrac{\Delta_3}{\Delta_2} & \dfrac{F}{\Delta_1\Delta_2}
\\[3mm]
0 & -\dfrac{\Delta_2}{\Delta_1} & 0 & \dfrac{D}{\Delta_1}
\\
\Delta_1 & 0 & 0 & 0
\end{pmatrix}.
$$
(The dots stand for zero entries.) The pair $(R'',E_4)$ is the
canonical form of the original pair $(P,Q)$. Thus the section is
obtained. Therefore the desired algebra~$A$ is contained in the
algebra
\begin{align*}
\tilde A&=\mathbb{C}\biggl[\Delta_1,\frac{\Delta_2}{\Delta_1}\,,
\frac{\Delta_3}{\Delta_2}\,,\frac{\Delta_4}{\Delta_3}\,,\frac{D}{\Delta_1}\,,
\frac{F}{\Delta_1\Delta_2}\,,\frac{D}{\Delta_3}\,,\frac{\Delta_1}{\Delta_2}\,,
\frac{1}{\Delta_1}\biggr]
\\
&\subset\mathbb{C}\biggl[\Delta_1,\Delta_2,\Delta_3,\Delta_4,D,F,
\frac{1}{\Delta_1}\,,\frac{1}{\Delta_2}\,,\frac{1}{\Delta_3}\biggr].
\end{align*}
By assertions~1),~3) of\, Theorem~\ref{th7} all the
functions~$\Delta_1$, $\Delta_2$, $\Delta_3$, $\Delta_4$, $D$,~$F$
are irreducible and generate~$A$ and their weights generate the
semigroup~$\widehat\Gamma(G/H)$.

For $n=3$ a matrix~$R$ in the open subset $M=\{\Delta_1\ne0, \
\Delta_2\ne\nobreak0, \allowbreak \Delta_3\ne\nobreak0\}\subset G$
can similarly be transformed to a new matrix~$R'$ where
$$
\overline R{}'=\begin{pmatrix} 0 & 0 & 0 & -\dfrac{1}{\Delta_1}
\\
0 & 0 & \dfrac{\Delta_1}{\Delta_2} & 0
\\
0 & 0 & 0 & \dfrac{D}{\Delta_3}
\\[3mm]
0 & 0 & \dfrac{\Delta_3}{\Delta_2} & \dfrac{F}{\Delta_1\Delta_2}
\\[3mm]
0 & -\dfrac{\Delta_2}{\Delta_1} & 0 & \dfrac{D}{\Delta_1}
\\
\Delta_1 & 0 & 0 & 0
\end{pmatrix}.
$$
From this it follows that $\Delta_4=-D$. The pair $(R',E_4)$ is the
canonical form of the original pair $(P,Q)$. Thus the section is
obtained. Therefore
\begin{align*}
A\subset \tilde
A&=\mathbb{C}\biggl[\Delta_1,\frac{\Delta_2}{\Delta_1}\,,
\frac{\Delta_3}{\Delta_2}\,,\frac{D}{\Delta_3}\,,\frac{F}{\Delta_1\Delta_2}\,,
\frac{D}{\Delta_1}\,,\frac{\Delta_1}{\Delta_2}\,,\frac{1}{\Delta_1}\biggr]
\\
&\subset\mathbb{C}\biggl[\Delta_1,\Delta_2,\Delta_3,\Delta_4,D,F,
\frac{1}{\Delta_1}\,,\frac{1}{\Delta_2}\,,\frac{1}{\Delta_3}\biggr].
\end{align*}
By assertions~1),~3) of Theorem~\ref{th7} all the
functions~$\Delta_1$, $\Delta_2$, $\Delta_3$, $D$,~$F$ are
irreducible and generate the algebra~$A$ and their weights generate
the semigroup~$\widehat\Gamma(G/H)$.

\textsl{Case}~7. $G=\oSp_{2n}\times \oSp_{2m}\times \oSp_{2l}$,
$H=H_0=\oSp_{2n-2}\times \oSp_{2m-2}\times \allowbreak
\oSp_{2l-2}\times \oSp_2$, $\mathfrak{X}(H)=0$. The first three
factors of~$H$ are embedded in the corresponding factors of~$G$ as
the central blocks of the appropriate size. The factor $\oSp_2$
of~$H$ is diagonally embedded in~$G$ as the $2\times 2$ block in the
first and last rows and columns of each factor.

We search for functions $f(P,Q,R)\in \mathbb{C}[G]$ such that
$$
f(P,Q,R)=f(u_1^{-1}Ph_1h_4,u_2^{-1}Qh_2h_4,u_3^{-1}Rh_3h_4)
$$
for all $P\in G_1$, $Q\in G_2$, $R\in G_3$, $u_i\in U_i$, $i=1,2,3$,
$h_j\in H_j$, $j=1,2,3,4$. Based on Theorem~\ref{th6}, we may
discard the actions of the first three factors of~$H$ and reduce
only the first and last columns of~$P$, $Q$, $R$ to canonical form.
Suppose $P=(p_{ij})$, $Q=(q_{ij})$, $R=(r_{ij})$.

Let us introduce the following functions:
$\Delta_1=p_{2n-1,1}p_{2n,2n}-p_{2n-1,2n}p_{2n,1}$,
$\Delta_2=q_{2m-1,1}q_{2m,2m}-q_{2m-1,2m}q_{2m,1}$,
$\Delta_3=r_{2l-1,1}r_{2l,2l}-r_{2l-1,2l}r_{2l,1}$,
$D_1=p_{2n,1}q_{2m,2m}-p_{2n,2n}q_{2m,1}$,
$D_2=q_{2m,1}r_{2l,2l}-q_{2m,2m}r_{2l,1}$,
$D_3=p_{2n,1}r_{2l,2l}-p_{2n,2n}r_{2l,1}$. If $n=1$ (resp. $m=1$,
$l=1$) then $\Delta_1\equiv1$ (resp. $\Delta_2\equiv1$,
$\Delta_3\equiv1$). All the functions~$\Delta_1$, $\Delta_2$,
$\Delta_3$, $D_1$, $D_2$,~$D_3$ lie in~$A$ and are weight functions
with respect to $B\times H$. Their weights are~$\pi_2$ ($n\ge2$),
$\varphi_2$ ($m\ge2$), $\psi_2$~($l\ge2$), $\pi_1+\varphi_1$,
$\varphi_1+\psi_1$, $\pi_1+\psi_1$, respectively. Below we shall
apply Theorem~\ref{th7} to these functions.

We consider the open subset $M\,{\subset}\, G$ determined by the
conditions $\Delta_1{\ne}\,0$, $\Delta_2{\ne}\,0$,
$\Delta_3{\ne}\,0$, $D_1\ne0$, $D_3\ne0$. Acting on the triple
$(P,Q,R)\in M$ by an appropriate matrix in~$H_4$ we obtain a new
triple $(P',Q',R')$ such that the lower $2\times 2$ blocks
of~$\overline P{}'$, $\overline Q{}'$, $\overline R{}'$ have the
form
\begin{gather*}
\begin{pmatrix}
1 & *
\\
0 & \Delta_1
\end{pmatrix},\qquad
\begin{pmatrix}
* & \dfrac{\Delta_2\Delta_1}{D_1}
\\
-\dfrac{D_1}{\Delta_1} & 0
\end{pmatrix},
\\
\begin{pmatrix}
\dfrac{r_{2l-1,1}p_{2n,2n}-r_{2l-1,2l}p_{2n,1}}{\Delta_1} &
(r_{2l-1,1}q_{2m,2m}-r_{2l-1,2l}q_{2m,1})\dfrac{\Delta_1}{D_1}
\\[3mm]
-\dfrac{D_3}{\Delta_1} & -\dfrac{D_2\Delta_1}{D_1}
\end{pmatrix},
\end{gather*}
respectively. Now acting by~$U$ (Lemma~\ref{lem4}) we transform the
triple $(P',Q',R')$ to a triple $(P'',Q'',R'')$ where for
$n,m,l\ge2$
\begin{gather*}
\overline P{}''=\begin{pmatrix} 0 & 0
\\
0 & -1
\\
\hdots & \hdots
\\
1 & 0
\\
0 & \Delta_1
\end{pmatrix},\qquad
\overline Q{}''=\begin{pmatrix} 0 & \dfrac{\Delta_1}{D_1}
\\
\hdots & \hdots
\\
0 & \dfrac{\Delta_2\Delta_1}{D_1}
\\
-\dfrac{D_1}{\Delta_1} & 0
\end{pmatrix},
\qquad \overline R{}''=\begin{pmatrix} 0 & \dfrac{\Delta_1}{D_3}
\\[3mm]
0 & 0
\\
\hdots & \hdots
\\
0 & \dfrac{\Delta_1\Delta_3}{D_3}
\\[3mm]
-\dfrac{D_3}{\Delta_1} & -\dfrac{D_2\Delta_1}{D_1}
\end{pmatrix}
\end{gather*}
(The dots stand for zero entries.) In the cases $n=1$, $m=1$, $l=1$
we have
$$
\overline P{}''=\begin{pmatrix} 1 & 0
\\
0 & 1
\end{pmatrix},\qquad
\overline Q{}''=\begin{pmatrix} 0 & \dfrac{\Delta_1}{D_1}
\\
-\dfrac{D_1}{\Delta_1} & 0
\end{pmatrix},\qquad
\overline R{}''=\begin{pmatrix} 0 & \dfrac{\Delta_1}{D_3}
\\[3mm]
-\dfrac{D_3}{\Delta_1} & -\dfrac{D_2\Delta_1}{D_1}
\end{pmatrix},
$$
respectively. The triple $(P'',Q'',R'')$ is the canonical form of
the original triple $(P,\allowbreak Q,R)$. Thus the section is
obtained. Therefore the algebra~$A$ is contained in the algebra
\begin{align*}
\tilde A&=\mathbb{C}\biggl[\Delta_1,\frac{\Delta_1}{D_1}\,,
\frac{\Delta_2\Delta_1}{D_1}\,,\frac{D_1}{\Delta_1}\,,\frac{\Delta_1}{D_3}\,,
\frac{\Delta_1\Delta_3}{D_3}\,,\frac{D_3}{\Delta_1}\,,
\frac{D_2\Delta_1}{D_1}\biggr]
\\
&\subset \mathbb{C}\biggl[\Delta_1,\Delta_2,\Delta_3,D_1, D_2,
D_3,\frac{1}{\Delta_1}\,,\frac{1}{D_1}\,,\frac{1}{D_3}\biggr].
\end{align*}

By assertions~1),~3) of Theorem~\ref{th7} the functions~$\Delta_1$
($n\ge2$), $\Delta_2$ ($m\ge\nobreak2$), $\Delta_3$~($l\ge2$),
$D_1$, $D_2$, $D_3$ are irreducible and generate~$A$ and their
weights generate the semigroup~$\widehat\Gamma(G/H)$.

\textsl{Case}~8. $G=\oSp_{2n}\times \oSp_4\times \oSp_{2m}$,
$H=H_0=\oSp_{2n-2}\times \oSp_2\times \oSp_2\times\allowbreak
\oSp_{2m-2}$, $\mathfrak{X}(H)=0$. The subgroup~$H$ is embedded
in~$G$ as follows. The factor~$H_1$ is embedded in~$G_1$ as the
central $(2n-2)\times(2n-2)$ block. The factor~$H_4$ is similarly
embedded in~$G_3$. The factor~$H_2$ is diagonally embedded in~$G_1$
and~$G_2$ as the $2\times 2$ block in the first and last rows and
columns. The factor~$H_3$ is diagonally embedded in~$G_2$ and~$G_3$,
as the central $2\times 2$ block in~$G_2$ and as the $2\times 2$
block in the first and last rows and columns in~$G_3$.

We are interested in those functions $f(P,Q,R)\in \mathbb{C}[G]$
that satisfy
$f(P,Q,R)=f(u_1^{-1}Ph_1h_2,u_2^{-1}Qh_2h_3,u_3^{-1}Rh_3h_4)$ for
all $P\in G_1$, $Q\in G_2$, $R\in G_3$, $u_i\in U_i$, $i=1,2,3$,
$h_j\in H_j$, $j=1,2,3,4$. Suppose $P=(p_{ij})$, $Q=(q_{ij})$,
$R=(r_{ij})$. Using Theorem~\ref{th6} we may discard the actions
of~$H_1$ and~$H_4$ and reduce only the first and last columns
of~$P$,~$R$ to canonical form.

We introduce the following functions:
$\Delta_1=q_{31}q_{44}-q_{34}q_{41}$,
$\Delta_2=q_{32}q_{43}-q_{33}q_{42}$,
$\delta_1=p_{2n,1}q_{44}-p_{2n,2n}q_{41}$,
$\delta_2=r_{2m,1}q_{43}-r_{2m,2m}q_{42}$,
$D_1=p_{2n-1,1}p_{2n,2n}-p_{2n-1,2n}p_{2n,1}$,
$D_2=r_{2m-1,1}r_{2m,2m}-r_{2m-1,2m}r_{2m,1}$,
$\Delta=\delta_2(p_{2n,1}q_{34}-p_{2n,2n}q_{31})-
\delta_1(r_{2m,1}q_{33}-r_{2m,2m}q_{32})$. We have $D_1\equiv1$ for
$n=1$, $D_2\equiv1$ for $m=1$. Since the last two columns of~$Q$ are
skew-orthogonal, it follows that $\Delta_2=-\Delta_1$. The
functions~$\Delta_1$, $\delta_1$, $\delta_2$, $D_1$, $D_2$,~$\Delta$
lie in~$A$ and are weight functions with respect to $B\times H$,
their weights are~$\varphi_2$, $\pi_1+\varphi_1$,
$\varphi_1+\psi_1$, $\pi_2$ ($n\ge2$), $\psi_2$ ($m\ge2$),
$\pi_1+\varphi_2+\psi_1$, respectively. Below we shall apply
Theorem~\ref{th7} to these functions.

Suppose $M\subset G$ is the open subset determined by the conditions
$\Delta_1\ne0$, $\delta_1\ne0$, $\delta_2\ne0$, $D_1 \ne 0$, $D_2
\ne 0$. Suppose $(P,Q,R)\in M$. Acting on this triple by appropriate
matrices in~$H_2$ and~$H_3$ we obtain a new triple of matrices
$(P',Q',R')$ where the matrix~$Q'$ and the lower $2\times 2$ blocks
of the matrices~$\overline P{}'$,~$\overline R{}'$ have the form
\begin{gather*}
\begin{pmatrix}
* & * & * & *
\\
* & * & * & *
\\
1 & 1 & (r_{2m,1}q_{33}-r_{2m,2m}q_{32})\dfrac{\Delta_2}{\delta_2} &
(p_{2n,1}q_{34}-p_{2n,2n}q_{31})\dfrac{\Delta_1}{\delta_1}
\\
0 & 0 & \Delta_2 & \Delta_1
\end{pmatrix},
\\[2mm]
\begin{pmatrix}
* & -\dfrac{D_1\Delta_1}{\delta_1}
\\
\dfrac{\delta_1}{\Delta_1} & 0
\end{pmatrix},\qquad
\begin{pmatrix}
* & -\dfrac{D_2\Delta_2}{\delta_2}
\\
\dfrac{\delta_2}{\Delta_2} & 0
\end{pmatrix},
\end{gather*}
respectively. Next, acting by~$U$ we transform the triple
($P',Q',R'$) to a triple ($P'',Q'',R''$) where for $n\ge2$ and
$m\ge2$
\begin{gather*}
\overline P{}''=\begin{pmatrix} 0 & -\dfrac{\Delta_1}{\delta_1}
\\
0 & 0
\\
\hdots & \hdots
\\
0 & -\dfrac{D_1\Delta_1}{\delta_1}
\\
\dfrac{\delta_1}{\Delta_1} & 0
\end{pmatrix},\quad
Q''=\begin{pmatrix} \dfrac{1}{\Delta_1} & 0 &
\dfrac{\Delta}{\delta_1\delta_2} & 0
\\
0 & 0 & -1 & 0
\\
1 & 1 & 0 & \dfrac{\Delta_1\Delta}{\delta_1\delta_2}
\\
0 & 0 & \Delta_2 & \Delta_1
\end{pmatrix},\quad
\overline R{}''=\begin{pmatrix} 0 & -\dfrac{\Delta_2}{\delta_2}
\\
0 & 0
\\
\hdots & \hdots
\\
0 & -\dfrac{D_2\Delta_2}{\delta_2}
\\
\dfrac{\delta_2}{\Delta_2} & 0
\end{pmatrix}.
\end{gather*}
(For the matrices~$P'$,~$R'$ this is possible by Lemma~\ref{lem4}.)
In these matrices the dots stand for zero entries. If $n=1$ or
$m=1$, then
$$
\overline P{}''=P''=\begin{pmatrix} 0 & -\dfrac{\Delta_1}{\delta_1}
\\
\dfrac{\delta_1}{\Delta_1} & 0
\end{pmatrix},\qquad
\overline R{}''=R''=\begin{pmatrix} 0 & -\dfrac{\Delta_2}{\delta_2}
\\
\dfrac{\delta_2}{\Delta_2} & 0
\end{pmatrix},
$$
respectively. The triple $(P'', Q'', R'')$ is the canonical form of
the triple $(P,Q,R)$. Thus the section is obtained. Therefore there
is an inclusion
\begin{align*}
A\subset \tilde A&=\mathbb{C}\biggl[\frac{\Delta_1}{\delta_1}\,,
\frac{D_1\Delta_1}{\delta_1}\,,\frac{\delta_1}{\Delta_1}\,,
\frac{\Delta_1}{\delta_2}\,,\frac{D_2\Delta_1}{\delta_2}\,,
\frac{\delta_2}{\Delta_1}\,,\frac{1}{\Delta_1}\,,
\frac{\Delta}{\delta_1\delta_2}\,,\frac{\Delta_1\Delta}{\delta_1\delta_2}\,,
\Delta_1\biggr]
\\
&\subset\mathbb{C}\biggl[\Delta_1,\delta_1,\delta_2,D_1,D_2,\Delta,
\frac{1}{\Delta_1}\,,\frac{1}{\delta_1}\,,\frac{1}{\delta_2}\biggr].
\end{align*}

By assertion~1) of\, Theorem~\ref{th7} the functions $\Delta_1$,
$\delta_1$, $\delta_2$, $D_1$ ($n\ge2$), $D_2$~($m\ge2$) are
irreducible. Let us prove that~$\Delta$ is also irreducible using
assertion~2) of Theorem~\ref{th7}. It is not hard to prove that
$\pi_1+\varphi_2+\psi_1=(\pi_1+\psi_1)+\varphi_2$ is the unique
representation of the weight of~$\Delta$ in the form~\eqref{eq3}
such that both summands lie in~$Z$. At that, $\varphi_2$ is the
weight of~$\Delta_1$. Consider the matrices $P=E_{2n}$,
$Q=\Omega_4$, $R=E_{2m}$. We have $(P,Q,R)\in G$,
$\Delta_1(P,Q,R)=0$, $\Delta(P,Q,R)=-1\ne0$, whence~$\Delta$ is not
divisible by~$\Delta_1$. Thus~$\Delta$ is irreducible. By
assertion~3) of Theorem~\ref{th7}, the functions~$\Delta_1$,
$\delta_1$, $\delta_2$, $D_1$ ($n\ge2$), $D_2$ ($m\ge2$), $\Delta$
generate the algebra~$A$ and their weights generate the
semigroup~$\widehat\Gamma(G/H)$.

\section*{Acknowledgements}

The author expresses his deep gratitude to E.\,B.~Vinberg for
stating the problem and numerous fruitful conversations, and also to
D.\,A.~Timashev for valuable suggestions.

\end{document}